\theoremstyle{plain}
\newtheorem{theorem}{\sc Theorem}[section]
\newcommand{\settheoremtag}[1]{
  \let\oldthetheorem\thetheorem
  \renewcommand{\thetheorem}{#1}
  \g@addto@macro\endtheorem{
    \addtocounter{theorem}{0}
    \global\let\thetheorem\oldthetheorem}
  }
\newtheorem{prop}[theorem]{\sc Proposition}
\newtheorem{lem}[theorem]{\sc Lemma}
\newtheorem{cor}[theorem]{\sc Corollary}
\theoremstyle{definition}
\newtheorem{defn}[theorem]{\sc Definition}
\newtheorem{rem}[theorem]{\sc Remark}
\DeclareMathOperator{\R}{\mathbb{R}}
\newcommand{\abs}[1]{\left\lvert#1\right\rvert}
\newcommand{\norm}[1]{\left\lVert#1\right\rVert}
\newcommand{\op}[1]{\operatorname{#1}}
\renewcommand{\hat}[1]{\widehat{#1}}
\numberwithin{equation}{section}
\renewcommand{\emptyset}{\varnothing}
\title{Selective symplectic homology with applications to contact non-squeezing}
\author{Igor Uljarevi\'c}
\date{June 2, 2023}
\begin{document}


\maketitle

\begin{abstract}
We prove a contact non-squeezing phenomenon on homotopy spheres that are fillable by Liouville domains with large symplectic homology: there exists a smoothly embedded ball in such a sphere that cannot be made arbitrarily small by a contact isotopy. These homotopy spheres include examples that are diffeomorphic to standard spheres and whose contact structures are homotopic to standard contact structures. As the main tool, we construct a new version of symplectic homology, called \emph{selective symplectic homology}, that is associated to a Liouville domain and an open subset of its boundary. The selective symplectic homology is obtained as the direct limit of Floer homology groups for Hamiltonians whose slopes tend to $+\infty$ on the open subset but remain close to 0 and positive on the rest of the boundary.
\end{abstract}
\section{Introduction}

One of the driving questions in contact geometry is how much it differs from smooth topology. How far does it go beyond topology? Does it, for instance, remember not only the shape but also the size of an object? In the absence of a natural measure, the size in contact geometry can conveniently be addressed via contact (non-)squeezing. We say that a subset $\Omega_a$ of a contact manifold $\Sigma$ can be contactly squeezed into a subset $\Omega_b\subset \Sigma$ if, and only if, there exists a contact isotopy $\varphi_t:\Sigma\to\Sigma, \: t\in[0,1]$ such that $\varphi_0=\op{id}$ and such that $\overline{\varphi_1(\Omega_a)}\subset \Omega_b$.
The most basic examples of contact manifolds are pessimistic as far as contact geometry and size are concerned. Namely, every bounded subset of the standard $\R^{2n+1}$ (considered with the contact form $dz +\sum_{j=1}^n \left( x_jdy_j - y_j dx_j\right)$) can be contactly squeezed into an arbitrarily small ball. This is because the map
\[ \R^{2n+1}\to\R^{2n+1}\quad:\quad (x,y,z)\mapsto \left(k\cdot x, k\cdot y, k^2\cdot z\right) \]
is a contactomorphism for all $k\in\R^+$. Consequently, every subset of a contact manifold whose closure is contained in a contact Darboux chart can be contactly squeezed into any non-empty open subset. In other words, contact geometry does not remember the size on a small scale. Somewhat surprisingly, this is not true on a large scale in general. 
In the next theorem, $B(R)$ denotes the ball of radius $R$.

\begin{theorem}[Eliashberg-Kim-Polterovich, Chiu]\label{thm:EKP}
The subset $\hat{B}(R) := B(R)\times\mathbb{S}^1$ of $\mathbb{C}^n\times \mathbb{S}^1$ can be contactly squeezed into itself via a compactly supported contact isotopy if, and only if, $R<1$.
\end{theorem}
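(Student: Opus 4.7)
I would prove the two implications separately: construct an explicit compactly supported contact isotopy for $R<1$, and extract a Floer-theoretic contact invariant that obstructs the squeeze when $R\geq 1$.

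\textbf{Squeezing for $R<1$.} For the ``if'' direction one exhibits a compactly supported contact isotopy $\varphi_s\colon\mathbb{C}^n\times\mathbb{S}^1\to\mathbb{C}^n\times\mathbb{S}^1$ with $\overline{\varphi_1(\hat B(R))}\subset \hat B(R)$. The idea is that when $R<1$, the action of a Hamiltonian on $\mathbb{C}^n$ that gradually contracts $B(R)$ inward is small enough, compared to the Reeb period $1$, for its contact lift to the prequantization $\mathbb{C}^n\times\mathbb{S}^1$ to remain single-valued and compactly supported. A radial contraction on $\mathbb{C}^n$, combined if necessary with a slow $\mathbb{S}^1$-shift, then promotes to the required contact squeeze.

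\textbf{Non-squeezing for $R\geq 1$.} This is the heart of the theorem. The plan is to attach a contact invariant $c(\Omega)\in\mathbb{Z}_{\geq 0}\cup\{\infty\}$ to bounded open subsets $\Omega\subset\mathbb{C}^n\times\mathbb{S}^1$, monotone under compactly supported contact squeezing, with $c(\hat B(R))<\infty$ iff $R<1$. The natural candidate is a selective symplectic homology of the kind developed in this paper: embed a large neighborhood of $\hat B(R)$ in a Liouville-type filling $W$ (for example a smoothing of $\overline{B(N)}\times \mathbb{S}^1$ with $N\gg R$), and take the direct limit of Floer homologies for Hamiltonians whose slope along $\partial W$ tends to $+\infty$ on $\Omega$ while staying small and positive on the complement. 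The $\mathbb{S}^1$-symmetry refines the invariant to a $\mathbb{Z}$-graded module; the relevant $1$-periodic orbits come in Morse--Bott families parametrized by iterates of Hopf circles on $\mathbb{S}^{2n-1}$, with actions proportional to $R^2$ (normalized so that $R=1$ corresponds to the Reeb period of the $\mathbb{S}^1$-factor). At $R=1$ every action window carries a nontrivial generator, so $c$ becomes infinite, obstructing any further squeezing into $\hat B(R)$.

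\textbf{Main obstacle.} The principal difficulty is in the non-squeezing direction and has two aspects. First, establishing monotonicity of $c$ under compactly supported contact (as opposed to symplectic) isotopies requires functoriality of selective symplectic homology continuation maps with respect to contact deformations of the boundary region, keeping the filling fixed; this is the feature that distinguishes the new invariant from classical symplectic homology and must be verified carefully. Second, the precise computation of $c(\hat B(R))$ requires an action--index analysis of the Morse--Bott families of $1$-periodic orbits, matching Conley--Zehnder indices against the $\mathbb{S}^1$-equivariant grading so that the jump occurs exactly at $R=1$. The bookkeeping between index, action filtration and $\mathbb{Z}$-grading is where the proof truly lives.
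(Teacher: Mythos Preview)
The paper does not prove this theorem. It is quoted in the introduction as a known result, attributed to Eliashberg--Kim--Polterovich and Chiu (with further references to Fraser and Sandon for alternative proofs), and serves purely as historical motivation for the paper's own non-squeezing results on homotopy spheres. There is therefore no ``paper's own proof'' to compare your proposal against.

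Your sketch nonetheless has a genuine structural gap if read as an application of the present paper's machinery. The selective symplectic homology $SH_\ast^\Omega(W)$ is defined for an open subset $\Omega$ of the boundary of a \emph{compact} Liouville domain $W$; the non-squeezing obstruction (Theorem~\ref{thm:ranknonsqueezing}) applies to subsets of $\partial W$. The contact manifold $\mathbb{C}^n\times\mathbb{S}^1$ is noncompact and is not the boundary of any Liouville domain, and your suggested ``filling'' $\overline{B(N)}\times\mathbb{S}^1$ is not a Liouville domain either (its boundary is $\mathbb{S}^{2n-1}\times\mathbb{S}^1$, not $\mathbb{C}^n\times\mathbb{S}^1$, and $\hat B(R)$ sits in the interior, not on the boundary). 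So the invariant $c(\Omega)$ you propose cannot be extracted from the construction in this paper without substantial modification. The actual proofs in the literature use genuinely different tools---cylindrical contact homology and orderability, microlocal sheaf theory, or generating functions---none of which is the selective symplectic homology developed here.
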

This remarkable phenomenon, that may be seen as a manifestation of the Heisenberg uncertainty principle, was first observed by Eliashberg, Kim, and Polterovich \cite{eliashberg2006geometry}. They proved the case where either $R<1$ or $R\in\mathbb{N}.$ Chiu \cite{chiu2017nonsqueezing} extended their result to radii that are not necessarily integer. Fraser \cite{fraser2016contact} presented an alternative proof of the case of non-integer radii that is more in line with the techniques used in \cite{eliashberg2006geometry}. (Fraser actually proved the following formally stronger statement: there does not exist a compactly supported contactomorphism of $\mathbb{C}^n\times\mathbb{S}^1$ that maps the closure of $\hat{B}(R)$ into $\hat{B}(R)$ if $R\geqslant 1.$ It seems not to be known whether the group of compactly supported contactomorphisms of $\mathbb{C}^n\times\mathbb{S}^1$ is connected.) Using generating functions, Sandon reproved the case of integer radii \cite{sandon2011contact}.

The contact non-squeezing results are rare. Apart from Theorem~\ref{thm:EKP}, there are only few results about contact non-squeezing \cite{eliashberg2006geometry,albers2018orderability,allais2021contact,de2019orderability}, and they are all concerning the subsets of the form $ U\times\mathbb{S}^1$ in the prequantization of a Liouville manifold. The present paper provides examples of contact manifolds that are diffeomorphic to standard spheres and that exhibit non-trivial contact non-squeezing phenomena. The following theorem is the first example of contact non-squeezing for a contractible subset, namely an embedded standard smooth ball.

\begin{theorem}\label{thm:Ustilovskyspheres}
Let $\Sigma$ be an Ustilovsky sphere. Then, there exist two embedded closed balls $B_1, B_2\subset \Sigma$ of dimension equal to $\dim \Sigma$ such that $B_1$ cannot be contactly squeezed into $B_2$. 
\end{theorem}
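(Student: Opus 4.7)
The plan is to use the selective symplectic homology $SH(W,U)$ built in the body of the paper as a contact-isotopy invariant that distinguishes a contactly ``small'' open subset of $\Sigma$ from a contactly ``large'' one. Let $W$ be a Brieskorn-type Liouville filling of $\Sigma$; the Ustilovsky construction is designed precisely so that the symplectic homology $SH(W)$ is infinite-dimensional, and this is the feature the selective theory will exploit.

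The first step is to establish three structural properties of $SH(W,-)$ that should follow from its definition as a direct limit of Floer homologies whose Hamiltonian slopes tend to $+\infty$ on $U$ and remain close to $0$ on $\Sigma\setminus U$. Monotonicity: an inclusion of open subsets $U\subset V\subset\Sigma$ induces a natural map $SH(W,U)\to SH(W,V)$, and the case $U=\Sigma$ reproduces $SH(W)$. Darboux vanishing: $SH(W,U)$ is small---ideally zero, or at least uniformly bounded in dimension---whenever $U$ is contained in a contact Darboux chart of $\Sigma$; the proof should construct, using the standard contact scaling $(x,y,z)\mapsto(kx,ky,k^2z)$ on $\R^{2n+1}$, a cofinal family of admissible Hamiltonians whose Floer complexes collapse. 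Contact-isotopy equivariance: a contact isotopy $\varphi_t$ of $\Sigma$ with $\varphi_1(\overline{A})\subset B$ induces a natural map on selective SH, built via parameterized Floer continuation along the path of contact forms $\varphi_t^*\alpha$; this step is delicate because $\varphi_1$ does not extend to a symplectomorphism of $W$.

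Next I would choose the two balls. Since $\Sigma$ is a topological sphere, I can pick a large closed ball $B_1\subset\Sigma$ whose complement $U_1:=\Sigma\setminus B_1$ lies in a contact Darboux chart, and a small closed ball $B_2\subset\Sigma$ also contained in a Darboux chart, so that $U_2:=\Sigma\setminus B_2$ is ``almost all of $\Sigma$''. By Darboux vanishing, $SH(W,U_1)$ is small. On the other hand, the monotonicity map $SH(W,U_2)\to SH(W)$ should have infinite-dimensional image by an excision-type argument: after a suitable perturbation of the contact form supported inside the Darboux chart containing $B_2$, the Reeb orbits generating $SH(W)$ can be arranged to lie in $U_2$. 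Now assume for contradiction that $B_1$ is contactly squeezed into $B_2$ by a contact isotopy $\varphi_t$ with $\overline{\varphi_1(B_1)}\subset\op{int}(B_2)$; then $\varphi_1^{-1}(\overline{U_2})\subset U_1$. Combining the isotopy equivariance for $\varphi_1^{-1}$ with monotonicity yields a commutative diagram in which an infinite-dimensional subspace of $SH(W)$ factors through the small group $SH(W,U_1)$---the desired contradiction.

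The principal obstacle is the establishment of these three structural properties in a mutually compatible form, especially Darboux vanishing and isotopy equivariance. For vanishing, one must verify that the linear scaling contactomorphism of $\R^{2n+1}$, implanted in a Darboux chart of $\Sigma$ and extended suitably to the cylindrical end of $W$, produces genuine admissible Hamiltonians whose $1$-periodic orbits admit uniform action and index bounds. The equivariance demands a parameterized Floer-theoretic construction treating $\varphi_t^*\alpha$ as continuation data, since $\varphi_1$ itself does not extend to $W$. Once these tools are in place the non-squeezing conclusion follows from a routine diagram chase.
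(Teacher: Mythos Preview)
Your overall architecture matches the paper: reduce to Theorem~\ref{thm:ranknonsqueezing} by exhibiting open subsets of $\Sigma=\partial W$ with small versus large rank of the continuation map into $SH_\ast(W)$, and use conjugation-equivariance plus monotonicity to derive a contradiction. Two of your three ``structural properties'' are also handled much as you suggest. (Your concern about isotopy equivariance is misplaced: a contact isotopy of $\partial W$ always lifts to an element of $\op{Symp}^\ast(\hat W,\lambda)$, so the conjugation isomorphisms of Section~\ref{sec:conjugationisomorphisms} apply directly; no parametrised continuation along $\varphi_t^\ast\alpha$ is needed.)

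The genuine gap is your ``excision-type argument'' that $SH^{U_2}_\ast(W)\to SH_\ast(W)$ has infinite rank once Reeb orbits are pushed off a small ball $B_2$. Knowing where the Reeb orbits sit does not tell you that a contact Hamiltonian $h\in\mathcal H_{U_2}(\partial W)$---which must vanish on $B_2$, be large on most of $U_2$, \emph{and} have no nonconstant $1$-periodic orbits---can be found with $HF_\ast(h+\varepsilon)$ capturing arbitrarily much of $SH_\ast(W)$. The transition from $0$ on $B_2$ to a large value on $U_2$ typically creates many periodic orbits of the contact flow of $h$, obstructing membership in $\mathcal H_{U_2}(\partial W)$; there is no evident mechanism to control this while making $h$ cofinal. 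The paper does \emph{not} argue this way. Instead it introduces the notion of an \emph{immaterial} subset (Section~7): a transverse circle $\Gamma$ in a small Darboux chart is immaterial by Lemma~\ref{lem:ptnegl}, and Theorem~\ref{thm:compnegl} shows that $SH^{\Sigma\setminus\Gamma}_\ast(W)\to SH_\ast(W)$ has infinite rank. The key trick there is not excision but a \emph{contractible loop of contactomorphisms} with contact Hamiltonian $f_t$ positive on $\Gamma$: composing with this loop gives an isomorphism $HF_\ast(h^\varepsilon)\cong HF_\ast(h^\varepsilon\# f)$ while arranging $h^\varepsilon\# f\geqslant\ell$ pointwise, so $r(h^\varepsilon)\geqslant r(\ell)$ for arbitrary $\ell$. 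A direct-limit argument (Theorem~\ref{thm:limitsh}) then extracts a non-dense open set with large rank. This circle-and-loop mechanism is the missing idea in your outline.

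A smaller omission: you assert that $SH_\ast(W)$ is infinite-dimensional because ``the Ustilovsky construction is designed precisely so''. Ustilovsky computed contact homology, not symplectic homology; the paper establishes $\dim SH_\ast(W)=\infty$ via a separate spectral-sequence computation (the Kwon--van~Koert sequence for Brieskorn varieties), which you would also need to carry out or cite.
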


An Ustilovky sphere is the $(4m+1)$-dimensional Brieskorn manifold
\[ \left\{ z=(z_0,\ldots, z_{2m+1})\in\mathbb{C}^{2m+2}\:|\: z_0^p + z_1^2 +\cdots + z_{2m+1}^2=0\:\&\: \abs{z}=1 \right\}\]
associated with natural numbers $m, p\in\mathbb{N}$ with $p\equiv \pm 1 \pmod{8} $. The Ustilovsky sphere is endowed with the contact structure given by the contact from
\[\alpha_p:= \frac{i p}{8}\cdot \left( z_0d\overline{z}_0-\overline{z}_0dz_0 \right) + \frac{i}{4}\cdot \sum_{j=1}^{2m+1}\left( z_jd\overline{z}_j-\overline{z}_jdz_j \right).\]
These Brieskorn manifolds were used by Ustilovsky \cite{ustilovsky1999infinitely} to prove the existence of infinitely many exotic contact structures on the standard sphere that have the same homotopy type as the standard contact structure. The strength of Theorem~\ref{thm:Ustilovskyspheres} lies in the topological simplicity of the objects used. A closed ball embedded in a smooth manifold can always be smoothly squeezed into an arbitrarily small (non-empty) open subset. Moreover, the obstruction to contact squeezing in Theorem~\ref{thm:Ustilovskyspheres} does not lie in the homotopy properties of the contact distribution. Namely, the contact distribution of an Ustilovsky sphere for $p\equiv 1 \pmod{2(2m)!}$ is homotopic to the standard contact distribution on the sphere and the contact non-squeezing on the standard contact sphere is trivial. A consequence of Theorem~\ref{thm:Ustilovskyspheres} is a contact non-squeezing on $\R^{4m+1}$ endowed with a non-standard contact structure.

\begin{cor}\label{cor:nonsqR}
Let $m\in\mathbb{N}$. Then, there exist a contact structure $\xi$ on $\R^{4m+1}$ and an embedded $(4m+1)$-dimensional closed ball $B\subset \R^{4m+1}$ such that $B$ cannot be squeezed into an arbitrary open non-empty subset by a compactly supported contact isotopy of $\left(\R^{4m+1}, \xi\right)$. 
\end{cor}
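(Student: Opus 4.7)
The plan is to realize $(\R^{4m+1},\xi)$ as $\Sigma\setminus\{p\}$ (with the restricted contact structure) for a $(4m+1)$-dimensional Ustilovsky sphere $\Sigma$ and a carefully chosen point $p$, and then to transfer the non-squeezing phenomenon of Theorem~\ref{thm:Ustilovskyspheres} across the one-point compactification. The key observation making this possible is that a compactly supported contact isotopy of $\Sigma\setminus\{p\}$ extends, by the identity on a neighborhood of $p$, to a smooth contact isotopy of all of $\Sigma$.

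First, I would apply Theorem~\ref{thm:Ustilovskyspheres} to produce embedded closed balls $B_1,B_2\subset\Sigma$ of dimension $4m+1$ such that $B_1$ cannot be contactly squeezed into $B_2$, and then pick a point $p\in\Sigma\setminus(B_1\cup B_2)$. This is possible after a harmless shrinking: since $B_1\neq\Sigma$, one can choose $p\in\Sigma\setminus B_1$, and if $p\in B_2$ one replaces $B_2$ by a smaller top-dimensional closed embedded ball $B_2'\subset B_2\setminus\{p\}$, whose existence is immediate (take a small closed coordinate ball centered at any interior point of $B_2\setminus\{p\}$). The non-squeezing of $B_1$ into $B_2'$ is automatically inherited from non-squeezing into $B_2$. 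Since $\Sigma$ is diffeomorphic to $S^{4m+1}$, I would then fix a diffeomorphism $\Phi:\R^{4m+1}\to\Sigma\setminus\{p\}$, set $\xi:=\Phi^{*}\xi_\Sigma$, define $B:=\Phi^{-1}(B_1)$, and take as candidate open non-empty subset $U:=\Phi^{-1}(\op{int} B_2)$.

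To conclude, I would suppose for contradiction that $\varphi_t$ is a compactly supported contact isotopy of $(\R^{4m+1},\xi)$ with $\varphi_1(B)\subset U$. The conjugated isotopy $\Phi\circ\varphi_t\circ\Phi^{-1}$ is a compactly supported contact isotopy of $\Sigma\setminus\{p\}$; extending by the identity on a neighborhood of $p$ yields a smooth contact isotopy $\tilde\varphi_t$ of $\Sigma$ with $\tilde\varphi_1(B_1)\subset\op{int} B_2\subset B_2$. Since $B_1$ is compact, $\overline{\tilde\varphi_1(B_1)}=\tilde\varphi_1(B_1)\subset B_2$, which squeezes $B_1$ into $B_2$ and contradicts Theorem~\ref{thm:Ustilovskyspheres}. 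I do not anticipate any substantive obstacle, since the whole argument is essentially formal once Theorem~\ref{thm:Ustilovskyspheres} is in place; the only mildly delicate point is the placement of $p$ outside $B_1\cup B_2$, handled by the shrinking observation above, while the smoothness of $\tilde\varphi_t$ at $p$ is automatic because it coincides with the identity on an open neighborhood of $p$.
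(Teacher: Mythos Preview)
Your proposal is correct and follows essentially the same route as the paper's proof: realize $\R^{4m+1}$ as an Ustilovsky sphere minus a point lying outside both balls, transport the compactly supported isotopy across the one-point compactification by extending it by the identity near $p$, and derive a contradiction with Theorem~\ref{thm:Ustilovskyspheres}. The only cosmetic difference is that the paper tacitly uses the construction in the proof of Theorem~\ref{thm:homologyspheres} to arrange $B_1\cup B_2\neq\Sigma$, whereas you achieve this by the shrinking observation on $B_2$; both are valid.
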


The exotic $\R^{4m+1}$ in Corollary~\ref{cor:nonsqR} is obtained by removing a point from an Ustilovsky sphere. In fact, the contact non-squeezing implies that $(\R^{4m+1}, \xi)$ constructed in this way (although tight) is not contactomorphic to the standard $\R^{4m+1}$. A more general result was proven by Fauteux-Chapleau and Helfer \cite{fauteux2021exotic} using a variant of contact homology: there exist infinitely many pairwise non-contactomorphic tight contact structures on $\R^{2n+1}$ if $n>1$. 

Theorem~\ref{thm:Ustilovskyspheres} is a consequence of the following theorem about homotopy spheres that bound Liouville domains with large symplectic homology.

\begin{theorem}\label{thm:homologyspheres}
Let $n> 2$ be a natural number and let $W$ be a $2n$-dimensional Liouville domain such that $\dim SH_\ast(W) > \sum_{j=1}^{2n} \dim H_j(W;\mathbb{Z}_2)$ and such that $\partial W$ is a homotopy sphere. Then, there exist two embedded closed balls $B_1, B_2\subset \partial W$ of dimension $2n-1$ such that $B_1$ cannot be contactly squeezed into $B_2$.
\end{theorem}

The smooth non-squeezing problem for a homotopy sphere is trivial: every non-dense subset of a homotopy sphere can be smoothly squeezed into an arbitrary non-empty open subset. This is due to the existence of Morse functions with precisely two critical points on the homotopy spheres. A smooth squeezing can be realized by the gradient flow of such a Morse function. Plenty of examples of Liouville domains that satisfy the conditions of Theorem~\ref{thm:homologyspheres} can be found among Brieskorn varieties. The Brieskorn variety $V(a_0,\ldots, a_m)$ is a Stein domain whose boundary is contactomorphic to the Brieskorn manifold $\Sigma(a_0,\ldots, a_m)$. Brieskorn \cite[Satz~1]{brieskorn1966beispiele} proved a simple sufficient and necessary condition (conjectured by Milnor) for a Brieskorn manifold to be homeomorphic to a sphere (see also \cite[Proposition~3.6]{kwon2016brieskorn}). Many of the corresponding Brieskorn varieties have infinite dimensional symplectic homology, for instance $V(3,2,2,\ldots,2)$. Thus, Theorem~\ref{thm:homologyspheres} also implies that there exists a non-trivial contact non-squeezing on the Kervaire spheres, i.e. on $\Sigma(3,2,\ldots, 2)$ for an odd number of 2's.

Our non-squeezing results are obtained using a novel version of symplectic homology, called \emph{selective symplectic homology}, that is introduced in the present paper. It resembles the relative symplectic cohomology by Varolgunes \cite{varolgunes2021mayer}, although the relative symplectic (co)homology and the selective symplectic homology are not quite the same. The selective symplectic homology, $SH_\ast^\Omega(W)$, is associated to a Liouville domain $W$ and an open subset $\Omega\subset \partial W$ of its boundary. Informally, $SH_\ast^{\Omega}(W)$ is defined as the Floer homology for a Hamiltonian on $W$ that is equal to $+\infty$ on $\Omega$ and to 0 on $\partial W\setminus \Omega$ (whereas, in this simplified view, the symplectic homology corresponds to a Hamiltonian that is equal to $+\infty$ everywhere on $\partial W$). The precise definition of the selective symplectic homology is given in Section~\ref{sec:SSH} below.

\sloppy The selective symplectic homology is related to the symplectic (co)homology of a Liouville sector that was introduced in \cite{ganatra2020covariantly} by Ganatra, Pardon, and Shende. As described in detail in \cite{ganatra2020covariantly}, every Liouville sector can be obtained from a Liouville manifold $X$ by removing the image of a stop. The notion of a stop on a Liouville manifold $X$ was defined by Sylvan \cite{sylvan2019partially} as a proper, codimension-0 embedding $\sigma: F\times\mathbb{C}_{\op{Re}<0}\to X$, where $F$ is a Liouville manifold, such that $\sigma^\ast \lambda_X= \lambda_F + \lambda_{\mathbb{C}} + df$, for a compactly supported $f$. Here, $ \lambda_X, \lambda_F, \lambda_{\mathbb{C}}$ are the Liouville forms on $X$, $F$, and $\mathbb{C}_{\op{Re}<0}$, respectively. We now compare the selective symplectic homology $SH_\ast^\Omega(W)$ and the symplectic homology $SH_\ast(X, \partial X)$, where $X= \hat{W}\setminus\op{im}\sigma$ is the Liouville sector obtained by removing a stop $\sigma$ from the completion $\hat{W}$, and $\Omega$ is the interior of the set $\partial W \setminus \op{im} \sigma$. Both $SH_\ast^\Omega(W)$ and $SH_\ast(X, \partial X)$  are, informaly speaking, Floer homologies for a Hamiltonian whose slope tends to infinity over $\Omega$. However, as opposed to $SH_\ast(X,\partial X)$, the selective symplectic homology $SH_\ast^\Omega(W)$ takes into account $\op{im} \sigma \cap W$, i.e. the part of the stop that lies outside of the conical end $\partial W\times(1,+\infty)$. Additionally, in the selective symplectic homology theory, there are no restrictions on $\Omega$: it can be any open subset, not necessarily the one obtained by removing a stop. On the technical side, $SH_\ast(X,\partial X)$ and $SH_\ast^\Omega(W)$ differ in the way the compactness issue is resolved. The symplectic homology of a Liouville sector is based on compactness arguments by Groman \cite{groman2015floer}, whereas the selective symplectic homology relies on a version of the Alexandrov maximum principle \cite[Theorem~9.1]{gilbarg1977elliptic}, \cite[Appendix~A]{abbondandolo2009estimates}, \cite{merry2019maximum}. It is an interesting question under what conditions $SH_\ast^\Omega(W)$ and $SH_\ast(X, \partial X)$ actually coincide. 

In simple terms, the non-squeezing results of the present paper are obtained by proving that a set $\Omega_b\subset \partial W$ with big selective symplectic homology cannot be contactly squeezed into a subset $\Omega_a\subset \partial W$ with $SH_\ast^{\Omega_a}(W)$ small (see Theorem~\ref{thm:ranknonsqueezing} on page~\pageref{thm:ranknonsqueezing}). The computation of the selective symplectic homology is somewhat challenging even in the simplest non-trivial cases. The key computations in the paper are that of $SH_\ast^D(W)$ where $D\subset\partial W$ is a contact Darboux chart, and that of $SH^{\partial W\setminus D}_\ast(W)$.  We prove that $SH_\ast^D( W)$ is isomorphic to $SH_\ast^\emptyset(W)$ by analysing the dynamics of a specific suitably chosen family of contact Hamiltonians that are supported in $D$ (see Theorem~\ref{thm:sshdarboux} on page~\pageref{thm:sshdarboux}). 
On the other hand, by utilizing the existence of a contractible loop of contactomorphisms that is positive over $D$, one can prove that $SH^{\partial W\setminus D}_\ast (W)$ is big if $SH_\ast(W)$ is big itself (see Section~\ref{sec:immaterial}). The proof is indirect and not quite straightforward. 
This proof also requires a feature of Floer homology for contact Hamiltonians that could be of interest in its own right and that has not appeared in the literature so far. Namely, there exists a collection of isomorphisms $\mathcal{B}(\sigma): HF_\ast(h)\to HF_\ast(h\# f)$ (one isomorphism for each admissible $h$) furnished by a family $\sigma$ of contactomorphisms of $\partial W$ that is indexed by a disc. In the formula above, $f$ is the contact Hamiltonian that generates the ``boundary loop'' of $\sigma$, and $h\#f$ is the contact Hamiltonian of the contact isotopy $\varphi^h_t\circ\varphi^f_t$. In addition, the isomorphisms $ \mathcal{B}(\sigma)$ give rise to an automorphism of the symplectic homology $SH_\ast(W)$.

\begin{rem}
    For the sake of simplicity, this paper defines the selective symplectic homology $SH_\ast^\Omega(W)$ in the framework of Liouville domains. The theory can actually be developed whenever $W$ is a symplectic manifold with contact type boundary such that the symplectic homology $SH_\ast(W)$ is well defined. This is the case, for instance, if $W$ is weakly+ monotone \cite{hofer1995floer} symplectic manifold with convex end. Theorem~\ref{thm:homologyspheres} and Theorem~\ref{thm:ranknonsqueezing} on page~\pageref{thm:ranknonsqueezing} are valid in this more general setting.
\end{rem}

What follows is a brief description of the main properties of the selective symplectic homology.

\subsection{Empty set}

The selective symplectic homology of the empty set is isomorphic, up to a shift in grading, to the singular homology of the Liouville domain relative its boundary:
\[ SH_\ast^{\emptyset}(W)\cong H_{\ast+ n} (W,\partial W; \mathbb{Z}_2),\]
where $2n=\dim W$. This is a straightforward consequence of the formal definition of the selective symplectic homology (Definition~\ref{def:SSH} on page \pageref{def:SSH}). Namely, it follows directly that $SH_\ast^\emptyset(W)$ is isomorphic to the Floer homology $HF_\ast(H)$ for a Hamiltonian $H_t:\hat{W}\to\R$ whose slope $\varepsilon>0$ is sufficiently small (smaller than any positive period of a closed Reeb orbit on $\partial W$). For such a Hamiltonian $H$, it is known (by a standard argument involving isomorphism of the Floer and Morse homologies for a $C^2$ small Morse function) that $HF_\ast(H)$ recoveres $H_{\ast+n}(W,\partial W;\mathbb{Z}_2)$.

\subsection{Canonical identification}\label{sec:canid}

Although not reflected in the notation, the group $SH_\ast^{\Omega}(W)$ depends only on the completion $\hat{W}$ and an open subset of the \emph{ideal contact boundary} of $\hat{W}$  (defined in \cite[page~1643]{eliashberg2006geometry}). More precisely,
$ SH_\ast^{\Omega}(W)= SH^{\Omega_f}_\ast(W^f),$
whenever  the pairs $(W, \Omega)$ and $(W^f, \Omega_f)$ are $\lambda$-related in the sense of the following definition.

\begin{defn}\label{def:lambdarel}
Let $(M,\lambda)$ be a Liouville manifold. Let $\Sigma_1,\Sigma_2\subset M$ be two hypersurfaces in $M$ that are transverse to the Liouville vector field. The subsets $\Omega_1\subset \Sigma_1$ and $\Omega_2\subset \Sigma_2$ are said to be $\lambda$-related if each trajectory of the Liouville vector field either intersects both $\Omega_1$ and $\Omega_2$ or neither of them.
\end{defn}

\subsection{Continuation maps}

To a pair $\Omega_a\subset \Omega_b$ of open subsets of $\partial W$, one can associate a morphism
\[\Phi=\Phi_{\Omega_a}^{\Omega_b} : SH_\ast^{\Omega_a}(W)\to SH_\ast^{\Omega_b}(W),\]
called \emph{continuation map}. The groups $SH_\ast^\Omega(W)$ together with the continuation maps form a directed system of groups indexed by open subsets of $\partial W$. In other words, $\Phi_{\Omega}^\Omega$ is equal to the identity and $\Phi_{\Omega_b}^{\Omega_c}\circ \Phi_{\Omega_a}^{\Omega_b}=\Phi_{\Omega_a}^{\Omega_c}$.

\subsection{Behaviour under direct limits}

Let $\Omega_k\subset \partial W$, $k\in\mathbb{N}$ be an increasing sequence of open subsets, i.e. $\Omega_k\subset \Omega_{k+1}$ for all $k\in\mathbb{N}$. Denote $\Omega:=\bigcup_{k=1}^{\infty} \Omega_k$. Then, the map
\[
\underset{k}{\lim_{\longrightarrow}}\: SH_\ast^{\Omega_k}(W) \to SH_\ast^{\Omega}(W),
\]
furnished by continuation maps is an isomorphism. The direct limit is taken with respect to continuation maps.

\subsection{Conjugation isomorphisms}\label{sec:conjugationiso}

The conjugation isomorphism
\[\mathcal{C}(\psi) : SH_\ast^{\Omega_a}(W)\to SH_\ast^{\Omega_b}(W)\]
is associated with a symplectomorphism $\psi:\hat{W}\to\hat{W}$, defined on the completion of $W$, that preserves the Liouville form outside of a compact set. With any such symplectomorphism $\psi$, one can associate a unique contactomorphism $\varphi:\partial W\to\partial W$, called \emph{ideal restriction}, such that
\[\psi(x,r)= \left( \varphi(x), f(x)\cdot r \right)\]
for $r\in\R^+$ large enough and for a certain positive function $f:\partial W\to \R^+$. The set $\Omega_b$ is the image of $\Omega_a$ under the contactomorphism $\varphi^{-1}:\partial W\to\partial W$. I.e. the conjugation isomorphism has the following form
\[\mathcal{C}(\psi) : SH_\ast^{\Omega}(W)\to SH_\ast^{\varphi^{-1}(\Omega)}(W),\]
where $\varphi$ is the ideal restriction of $\psi$. As a consequence, the groups $SH^{\Omega}_\ast(W)$ and $SH^{\varphi(\Omega)}_\ast(W)$ are isomorphic whenever the contactomorphism $\varphi$ is the ideal restriction of some symplectomorphism $\psi:\hat{W}\to\hat{W}$ (that preserves the Liouville form outside of a compact set). If a contactomorphism of $\partial W$ is contact isotopic to the identity, then it is equal to the ideal restriction of some symplectomorphism of $\hat{W}$. Hence, if $\Omega_a, \Omega_b\subset \partial W$ are two contact isotopic open subsets (i.e. there exists a contact isotopy $\varphi_t: \partial W\to \partial W$ such that $\varphi_0=\op{id}$ and such that $\varphi_1(\Omega_a)=\Omega_b$), then the groups $SH_\ast^{\Omega_a}(W)$ and $SH_\ast^{\Omega_b}(W)$ are isomorphic. The conjugation isomorphisms behave well with respect to the continuation maps, as asserted by the next theorem.

\begin{theorem}\label{thm:conjVSsont}
Let $W$ be a Liouville domain, let $\psi:\hat{W}\to\hat{W}$ be a symplectomorphism that preserves the Liouville form outside of a compact set, and let $\varphi:\partial W\to\partial W$ be the ideal restriction of $\psi$. Let $\Omega_a\subset \Omega_b\subset \partial W$ be open subsets. Then, the following diagram, consisting of conjugation isomorphisms and continuation maps, commutes
\[\begin{tikzcd}
SH_\ast^{\Omega_a}(W) \arrow{r}{\mathcal{C}(\psi)}\arrow{d}{\Phi}& SH_\ast^{\varphi^{-1}(\Omega_a)}(W)\arrow{d}{\Phi}\\
SH_\ast^{\Omega_b}(W) \arrow{r}{\mathcal{C}(\psi)}& SH_\ast^{\varphi^{-1}(\Omega_b)}(W).
\end{tikzcd}\]
\end{theorem}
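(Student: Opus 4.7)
The plan is to establish the commutativity first at the level of Floer complexes for individual admissible Hamiltonians, and then to transport this chain-level identity through the direct limits that define $SH_\ast^\Omega(W)$. Everything rests on the observation that pullback by the symplectomorphism $\psi$ intertwines both the Hamiltonian Floer differential and the chain-level continuation maps.

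On the chain level, if $H$ is a Hamiltonian on $\hat{W}$ whose slope structure at infinity is concentrated on $\Omega$, then $H\circ\psi$ has slope structure concentrated on $\varphi^{-1}(\Omega)$, because $\psi$ acts on the cylindrical end as $(x,r)\mapsto (\varphi(x), f(x)\cdot r)$. The bijection $x(t)\mapsto \psi^{-1}\circ x(t)$ between 1-periodic orbits of $X_H$ and $X_{H\circ\psi}$, combined with the bijection $u\mapsto \psi^{-1}\circ u$ on Floer cylinders (pairing an almost complex structure $J$ with its pullback $\psi^\ast J$), yields a chain isomorphism $CF_\ast(H,J)\cong CF_\ast(H\circ\psi,\psi^\ast J)$ inducing $\mathcal{C}(\psi)$. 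Applying the same pullback to a monotone homotopy $(H_s,J_s)$ realising the continuation map between admissible Hamiltonians $H_a$ for $\Omega_a$ and $H_b$ for $\Omega_b$, I obtain the homotopy $(H_s\circ\psi,\psi^\ast J_s)$, which is again monotone (since $\psi$ is a symplectomorphism) and realises the continuation map between $H_a\circ\psi$ and $H_b\circ\psi$. The bijection $u\mapsto \psi^{-1}\circ u$ between solutions of the $s$-dependent Floer equations identifies the two chain-level continuation maps, so the square commutes tautologically at the chain level.

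It then remains to pass to the direct limits. Since $\psi$ preserves the Liouville form outside a compact set, conjugation by $\psi$ sends cofinal sequences of admissible Hamiltonians for $\Omega$ to cofinal sequences for $\varphi^{-1}(\Omega)$, and analogously for monotone homotopies between them. The commutative chain-level squares then assemble into the desired commutative diagram of direct limits. The main obstacle I anticipate is the interplay between the rescaling factor $f(x)$ appearing in the formula for $\psi$ and the slope choices defining the cofinal sequences: the naive pullback of a preferred cofinal sequence for $\Omega$ need not coincide with a preferred cofinal sequence for $\varphi^{-1}(\Omega)$. Resolving this requires inserting auxiliary continuation maps between pulled-back sequences and preferred ones, and verifying, using the functoriality of $\Phi$ already at hand, that these auxiliary maps commute with the square above. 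This should be routine diagram-chasing exploiting the universal property of the direct limit, but it is the step that needs the most care.
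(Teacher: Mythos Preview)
Your approach is correct and essentially identical to the paper's: the paper's proof is a single sentence stating that the result follows directly from the chain-level commutativity of conjugation isomorphisms and continuation maps on $HF_\ast(H,J)$, which is exactly the mechanism you spell out. Your concern about the rescaling factor $f(x)$ is legitimate but is handled by the proposition immediately preceding the theorem in the paper, which verifies that pullback by $\psi$ sends $\mathcal{H}_\Omega(\partial W)$ to $\mathcal{H}_{\varphi^{-1}(\Omega)}(\partial W)$ and $\Pi(h)$ to $\Pi(g\cdot h\circ\varphi)$ (for the appropriate positive function $g$), so the conjugation isomorphism is already well defined on $SH_\ast^\Omega(W)$ and no additional diagram-chasing is required; note also that the definition of $SH_\ast^\Omega(W)$ involves an inverse limit over $\Pi(h)$ as well as the direct limit, but since that inverse limit stabilises this does not affect your argument.
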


\subsection*{Applications}

The selective symplectic homology is envisioned as a tool for studying contact geometry and dynamics of Liouville fillable contact manifolds. The present paper shows how it can be used to prove contact non-squeezing type of results. This is illustrated by the following abstract observation.

\begin{theorem}\label{thm:ranknonsqueezing}
Let $W$ be a Liouville domain and let $\Omega_a, \Omega_b\subset \partial W$ be open subsets. If the rank of the continuation map
$SH_\ast^{\Omega_b}(W)\to SH_\ast(W)$
is (strictly) greater than the rank of the continuation map
$SH_\ast^{\Omega_a}(W)\to SH_\ast(W),$
then $\Omega_b$ cannot be contactly squeezed into $\Omega_a$.
\end{theorem}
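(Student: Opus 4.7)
The plan is to prove the contrapositive: if $\Omega_b$ can be contactly squeezed into $\Omega_a$, then the rank of $\Phi_{\Omega_b}^{\partial W}:SH_\ast^{\Omega_b}(W)\to SH_\ast(W)$ is at most the rank of $\Phi_{\Omega_a}^{\partial W}:SH_\ast^{\Omega_a}(W)\to SH_\ast(W)$. Here and below I tacitly identify $SH_\ast(W)$ with $SH_\ast^{\partial W}(W)$, which is the natural reading of the set-up of Sections~\ref{sec:SSH} and following.

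Assume a contact isotopy $\varphi_t:\partial W\to\partial W$ with $\varphi_0=\op{id}$ and $\overline{\varphi_1(\Omega_b)}\subset\Omega_a$, and set $\varphi:=\varphi_1$. In particular $\Omega_b\subset\varphi^{-1}(\Omega_a)$. By the discussion in Section~\ref{sec:conjugationiso}, any contactomorphism contact isotopic to the identity is the ideal restriction of a symplectomorphism $\psi:\hat{W}\to\hat{W}$ that preserves the Liouville form outside a compact set, so we obtain such a $\psi$ with ideal restriction $\varphi$.

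Next I would assemble the comparison of ranks from two ingredients. First, the directed-system property $\Phi_{\Omega_c}^{\Omega'}\circ\Phi_{\Omega_b}^{\Omega_c}=\Phi_{\Omega_b}^{\Omega'}$ applied to the chain $\Omega_b\subset\varphi^{-1}(\Omega_a)\subset\partial W$ gives the factorisation
\[
\Phi_{\Omega_b}^{\partial W}=\Phi_{\varphi^{-1}(\Omega_a)}^{\partial W}\circ\Phi_{\Omega_b}^{\varphi^{-1}(\Omega_a)},
\]
so $\op{rank}\Phi_{\Omega_b}^{\partial W}\leqslant\op{rank}\Phi_{\varphi^{-1}(\Omega_a)}^{\partial W}$. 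Second, Theorem~\ref{thm:conjVSsont} applied to the inclusion $\Omega_a\subset\partial W$ (and noting $\varphi^{-1}(\partial W)=\partial W$) yields the commutative square
\[\begin{tikzcd}
SH_\ast^{\Omega_a}(W) \arrow{r}{\mathcal{C}(\psi)}\arrow{d}{\Phi_{\Omega_a}^{\partial W}}& SH_\ast^{\varphi^{-1}(\Omega_a)}(W)\arrow{d}{\Phi_{\varphi^{-1}(\Omega_a)}^{\partial W}}\\
SH_\ast(W) \arrow{r}{\mathcal{C}(\psi)}& SH_\ast(W).
\end{tikzcd}\]
Since both horizontal arrows are conjugation isomorphisms, $\Phi_{\varphi^{-1}(\Omega_a)}^{\partial W}$ is conjugate to $\Phi_{\Omega_a}^{\partial W}$ by isomorphisms, so the two have equal rank.

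Combining these two steps,
\[
\op{rank}\Phi_{\Omega_b}^{\partial W}\leqslant\op{rank}\Phi_{\varphi^{-1}(\Omega_a)}^{\partial W}=\op{rank}\Phi_{\Omega_a}^{\partial W},
\]
which contradicts the hypothesis. The main substantive input is Theorem~\ref{thm:conjVSsont}; once that is in hand, the argument is essentially formal. The only place where one has to be slightly careful is the existence of the extending symplectomorphism $\psi$ whose ideal restriction is $\varphi$, but this is precisely what the paragraph preceding Theorem~\ref{thm:conjVSsont} asserts for contactomorphisms isotopic to the identity, so it applies directly here.
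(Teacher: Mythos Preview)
Your proof is correct and follows essentially the same approach as the paper's: both argue by contradiction using the factorisation of continuation maps through an intermediate set together with the fact (from Theorem~\ref{thm:conjVSsont}) that conjugation by a lifted contact isotopy preserves the rank of the continuation map to $SH_\ast(W)$. The only cosmetic difference is that the paper applies the conjugation isomorphism to $\Omega_b$ (comparing $r(\Omega_b)$ with $r(\varphi_1(\Omega_b))$ and then factoring through $\Omega_a$), whereas you apply it to $\Omega_a$ (comparing $r(\Omega_a)$ with $r(\varphi^{-1}(\Omega_a))$ and factoring $\Omega_b$ through $\varphi^{-1}(\Omega_a)$); these are equivalent.
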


The theory of selective symplectic homology has rich algebraic structure that is beyond the scope of the present paper. For instance,
\begin{enumerate}
    \item one can construct a persistent module associated to an open subset of a contact manifold,
    \item topological quantum field theory operations are well defined on $SH_\ast^\Omega(W),$
    \item it is possible to define transfer morphisms for selective symplectic homology in analogy to Viterbo's transfer morphisms for symplectic homology,
    \item there exist positive selective symplectic homology, $\mathbb{S}^1$-equivariant selective symplectic homology, positive  $\mathbb{S}^1$-equivariant selective symplectic homology...
\end{enumerate}

\subsection*{The structure of the paper}
The paper is organized as follows. Section~\ref{sec:prelim} recalls the definition of Liouville domains and construction of the Hamiltonian-loop Floer homology. Sections~\ref{sec:SSH} - \ref{sec:conjugationisomorphisms} define the selective symplectic homology and derive its properties. Sections~\ref{sec:darboux} - \ref{sec:main} contain proofs of the applications to the contact non-squeezing and necessary computations. Section~\ref{sec:pathiso} discusses isomorphisms of contact Floer homology induced by families of contactomorphisms indexed by a disc.

\subsection*{Acknowledgements}
I would like to thank Paul Biran and Leonid Polterovich for their interest in this work and for valuable suggestions. This research was supported by the Science Fund of the Republic of Serbia, grant no.~7749891, Graphical Languages - GWORDS.

\section{Preliminaries}\label{sec:prelim}

\subsection{Liouville manifolds}

This section recalls the notions of a Liouville domain and a Liouville manifold of finite type. Liouville manifolds (of finite type) play the role of an ambient space in this paper. The selective symplectic homology is built from objects on a Liouville manifold of finite type.

\begin{defn}
A Liouville manifold of finite type is an open manifold $M$ together with a 1-form $\lambda$ on it such that the following conditions hold.
\begin{enumerate}
    \item The 2-form $d\lambda$ is a symplectic form on $M.$
    \item \sloppy There exist a contact manifold $\Sigma$ with a contact form $\alpha$ and a codimension-0 embedding
    $ \iota : \Sigma\times\R^+\to M $
    such that $M\setminus \iota(\Sigma\times\R^+)$ is a compact set, and such that $\iota^\ast \lambda=r\cdot \alpha,$ where $r$ stands for the $\R^+$ coordinate.
\end{enumerate}
\end{defn}
We will refer to the map $\iota$ as a \emph{conical end} of the Liouville manifold $M.$ With slight abuse of terminology, the set $\iota(\Sigma\times \R^+)$ will also be called \emph{conical end}. A conical end is not unique.

The Liouville vector field, $X_\lambda,$ of the Liouville manifold $(M, \lambda)$ of finite type is the complete vector field defined by $d\lambda(X_\lambda, \cdot)=\lambda.$ If $\Sigma\subset M$ is a closed hypersurface that is transverse to the Liouville vector field $X_\lambda,$ then $\left.\lambda\right|_{\Sigma}$ is a contact form on $\Sigma$ and there exists a unique codimension-0 embedding
$ \iota_\Sigma: \Sigma\times\R^+\to M $
such that $\iota_\Sigma(x,1)=x$ and such that $\iota_\Sigma^\ast\lambda= r\cdot \left.\lambda\right|_{\Sigma}$.

The notion of a Liouville manifold of finite type is closely related to that of a Liouville domain.

\begin{defn}
A Liouville domain is a compact manifold $W$ (with boundary) together with a 1-form $\lambda$ such that
\begin{enumerate}
    \item $d\lambda$ is a symplectic form on $W,$
    \item the Liouville vector field $X_\lambda$ points transversely outwards at the boundary. 
\end{enumerate}
\end{defn}

The Liouville vector field on a Liouville domain $(W,\lambda)$ is not complete. The completion of the Liouville domain is the Liouville manifold $(\hat{W},\hat{\lambda})$ of finite type obtained by extending the integral curves of the vector field $X_\lambda$ towards $+\infty.$ Explicitly, as a topological space,
\[\hat{W}\quad:=\quad W\quad\cup_{\partial}\quad (\partial W)\times [1,+\infty).\]
The manifolds $(\partial W)\times [1,+\infty)$ and $W$ are glued along the boundary via the map
\[\partial W\times\{1\}\to\partial W\quad:\quad (x,1)\mapsto x.  \]

The completion $\hat{W}$ is endowed with the unique smooth structure such that the natural inclusions $W\hookrightarrow \hat{W}$ and $\partial W\times [1, +\infty)\hookrightarrow \hat{W}$ are smooth embeddings, and such that the vector field $X_\lambda$ extends smoothly to $\partial W\times [1,+\infty)$ by the vector field $r\partial_r.$ (Here, we tacitly identified $\partial W\times [1,+\infty)$ and $W$ with their images under the natural inclusions.) The 1-form $\hat{\lambda}$ is obtained by extending the 1-form $\lambda$ to $\partial W\times[1,+\infty)$ by $r\cdot \left.\lambda\right|_{\partial W.}$
The completion of a Liouville domain is a Liouville manifold of finite type. And, other way around, every Liouville manifold of finite type is the completion of some Liouville domain.

Let $M$ be a Liouville manifold of finite type, let $W\subset M$ be a codimension-0 Liouville subdomain, and let $f:\partial W\to\R^+$ be a smooth function. The completion $\hat{W}$ can be seen as a subset of $M$. Throughout the paper, $W^f$ denotes the subset of $M$ defined by
\[W^f:=\hat{W}\setminus\iota_{\partial W}\big(\{f(x)\cdot r>1\}\big).\]
Here, $\{f(x)\cdot r>1\}$ stands for $\left\{(x,r)\in\partial W\times \R^+\:|\: f(x)\cdot r>1\right\}$. The set $W^f$ is a codimension-0 Liouville subdomain in its own right, and the completions of $W$ and $W^f$ can be identified.

\subsection{Floer theory}

In this section, we recall the definition of the Floer homology for a contact Hamiltonian, $HF_\ast(W,h).$ A contact Hamiltonian is called admissible if it does not have any $1$-periodic orbits and if it is 1-periodic in the time variable. The group $HF_\ast(W,h)$ is associated to a Liouville domain $(W,\lambda)$ and to an admissible contact Hamiltonian $h_t:\partial W\to \R$ that is defined on the boundary of $W.$ 

The Floer homology for contact Hamiltonians was introduced in \cite{merry2019maximum} by Merry and the author. It relies heavily on the Hamiltonian loop Floer homology \cite{floer1989symplectic} and  symplectic homology \cite{floer1994symplectic,floer1994applications,cieliebak1995symplectic,cieliebak1996applications,viterbo1999functors,viterbo2018functors}, especially the version of symplectic homology by Viterbo \cite{viterbo1999functors}.

\subsubsection{Auxiliary data}

Let $(W,\lambda)$ be a Liouville domain, and let $h_t:\partial W\to \R$ be an admissible contact Hamiltonian. The group $HF_\ast(W, h)$ is defined as the Hamiltonian loop Floer homology, $HF_\ast(H,J),$ associated to a Hamiltonian $H$ and an almost complex structure $J.$ Both $H$ and $J$ are objects on the completion $\hat{W}=:M$ of the Liouville domain $W.$ Before stating the precise conditions that $H$ and $J$ are assumed to satisfy, we define the set $\mathcal{J}(\Sigma, \alpha)$ of almost complex structures of \emph{SFT type}. Let $\Sigma$ be a contact manifold with a contact form $\alpha$. The set $\mathcal{J}(\Sigma, \alpha)$ (or simply $\mathcal{J}(\Sigma)$ when it is clear from the context what the contact form is equal to) is the set of almost complex structures $J$ on the symplectization $\Sigma\times\R^+$ such that
\begin{itemize}
    \item $J$ is invariant under the $\R^+$ action on $\Sigma\times\R^+$,
    \item $J(r\partial_r)= R_\alpha$, where $R_\alpha$ is the Reeb vector field on $\Sigma$ with respect to the contact form $\alpha$,
    \item the contact distribution $\xi:=\ker \alpha $ is invariant under $J$ and $\left.J\right|_{\xi}$ is a compatible complex structure on the symplectic vector bundle $(\xi, d\alpha)\to \Sigma$.
\end{itemize}
The list of the conditions for $(H,J)$ follows.
\begin{enumerate}
    \item (Conditions on the conical end). There exist a positive number $a\in\R^+$ and a constant $c\in\R$ such that
    \[H_t\circ\iota_{\partial W}(x,r)= r\cdot h(x) + c,\]
    for all $t\in\R$ and $(x,r)\in\partial W\times[a,+\infty),$ and such that $\iota_{\partial W}^\ast J_t$ coincides with an element of $\mathcal{J}(\partial W)$ on $\partial W\times [a,+\infty)$ for all $t\in\R$. Here, $\iota_{\partial W}: \partial W\times\R^+\to M$ is the conical end of $M$ associated to $\partial W.$
    \item (One-periodicity). For all $t\in\R,$ $H_{t+1}=H_t$ and $J_{t+1}=J_t.$
    \item ($d\hat{\lambda}$-compatibility). $d\hat{\lambda}(\cdot, J_t\cdot)$ is a Riemannian metric on $M$ for all $t\in\R.$
\end{enumerate}

The pair $(H,J)$ that satisfies the conditions above is called \emph{Floer data} (for the contact Hamiltonian $h$ and the Liouville domain $(W,\lambda)$). Floer data $(H,J)$ is called \emph{regular} if, additionally, the following two conditions hold.

\begin{enumerate}
    \setcounter{enumi}{3}
    \item (Non-degeneracy). The linear map
    \[ d\phi^H_1(x)-\op{id}\quad:\quad T_xM\to T_xM \]
    is invertible for all fixed points $x$ of $\phi_1^H.$
    \item(Regularity). The linearized operator of the Floer equation
    \[ u:\R\times (\R/\mathbb{Z})\to M,\quad \partial_s u+ J_t(u)(\partial_t u- X_{H_t}(u))=0 \]
    is surjective.
\end{enumerate}

\subsubsection{Floer complex}

Let $(H,J)$ be regular Floer data. The Floer complex, $CF_\ast(H,J),$ is built up on the contractible 1-periodic orbits of the Hamiltonian $H$. For every 1-periodic orbit $\gamma$ of the Hamiltonian $H,$ there exists a fixed point $x$ of $\phi^H_1$ such that $\gamma(t)=\phi^H_t(x).$ The degree, $\deg\gamma=\deg_H\gamma,$ of a contractible 1-periodic orbit $\gamma=\phi^H_\cdot(x)$ of the Hamiltonian $H$ is defined to be the negative Conley-Zehnder index of the path of symplectic matrices that is obtained from $d\phi^H_t(x)$ by trivializing $TM$ along a disc that is bounded by $\gamma$ (see \cite{salamon1999lectures} for details concerning the Conley-Zehnder index). Different choices of the capping disc can lead to different values of the degree, however they all differ by an even multiple of the minimal Chern number
\[N:=\min \left\{ c_1(u)>0\:|\: u:\mathbb{S}^2\to M \right\}.\]
Therefore, $\deg \gamma$ is well defined as an element of $\mathbb{Z}_{2N}$ (but not as an element of $\mathbb{Z},$ in general). The Floer chain complex as a group is defined by
\[CF_k(H,J):=\bigoplus_{\deg \gamma=k} \mathbb{Z}_2\left\langle\gamma\right\rangle.\]
Since the Floer data $(H,J)$ is regular, the set $\mathcal{M}(H,J, \gamma^-, \gamma^+)$ of the solutions  $u:\R\times(\R/\mathbb{Z})\to M$ of the Floer equation
\[ \partial_s u + J_t(u)(\partial_t u - X_{H_t}(u))=0\]
that join two 1-periodic orbits $\gamma^-$ and $\gamma^+$ of $H$ (i.e. $\displaystyle \lim_{s\mapsto\pm\infty} u(s,t)=\gamma^\pm(t)$) is a finite dimensional manifold (components of which might have different dimensions). There is a natural $\R$-action on $\mathcal{M}(H,J, \gamma^-, \gamma^+)$ given by
\[ \R\:\times\: \mathcal{M}(H,J, \gamma^-, \gamma^+)\quad\mapsto\quad \mathcal{M}(H,J, \gamma^-, \gamma^+)\quad :\quad (a, u)\mapsto u(\cdot +a, \cdot). \]
The quotient
\[\tilde{\mathcal{M}}(H,J,\gamma^-,\gamma^+):=\mathcal{M}(H,J,\gamma^-,\gamma^+)/\mathbb{R}\]
of $\mathcal{M}(H,J,\gamma^-,\gamma^+)$ by this action is also a finite dimensional manifold. Denote by $n(\gamma^-, \gamma^+)=n(H,J, \gamma^-, \gamma^+)\in\mathbb{Z}_2$ the parity of the number of 0-dimensional components of $\tilde{\mathcal{M}}(H,J,\gamma^-,\gamma^+).$ The boundary map 
\[\partial : CF_{k+1}(H,J)\to CF_k(H,J)\]
is defined on the generators by
\begin{equation}\label{eq:boundary}\partial \left\langle \gamma\right\rangle:=\sum_{\tilde{\gamma}} n(\gamma,\tilde{\gamma})\left\langle \tilde{\gamma} \right\rangle.\end{equation}
\sloppy If $\deg\gamma\not=\deg\tilde{\gamma}+1$, there are no 0-dimensional components of $\tilde{\mathcal{M}}(H,J,\gamma^-,\gamma^+)$, and therefore, $n(\gamma,\tilde{\gamma})=0.$ Hence, the sum in \eqref{eq:boundary} can be taken only over $\tilde{\gamma}$ that satisfy $\op{deg}\tilde{\gamma}=\op{deg}\gamma-1$. The homology of the chain complex $CF_\ast(H,J)$ is denoted by $HF_\ast(H,J).$

\subsubsection{Continuation maps}

Continuation maps compare Floer homologies for different choices of Floer data. They are associated to generic monotone homotopies of Floer data that join two given instances of Floer data. We refer to these homotopies as continuation data.

Let $(H^-, J^-)$ and $(H^+, J^+)$ be regular Floer data. The continuation data from $(H^-, J^-)$ to $(H^+, J^+)$ is a pair $(\{H_{s,t}\}, \{J_{s,t}\})$ that consists of an $s$-dependent Hamiltonian $H_{s,t}:M\to\R$ and a family $J_{s,t}$ of almost complex structures on $M$ such that the following conditions hold:

\begin{enumerate}
    \item (Homotopy of Floer data). For all $s\in\R,$ the pair $(H_{s,\cdot}, J_{s,\cdot})$ is Floer data (not necessarily regular) for some contact Hamiltonian.
    \item (Monotonicity). There exists $a\in\R^+$ such that 
    $\partial_s H_{s,t}(x)\geqslant0,$ 
    for all $s,t\in\R$ and $x\in\iota_{\partial W}(\partial W\times [a,+\infty)).$
    \item ($s$-independence at the ends). There exists $b\in\R^+$ such that 
    $H_{s,t}(x)= H^{\pm}_t(x),$
    for all $t\in \R$ and $x\in M$, if $\pm s\in [b,+\infty)$. 
\end{enumerate}

Continuation data $(\{H_{s,t}\},\{J_{s,t}\})$ is called \emph{regular} if the linearized operator of the $s$-dependent Floer equation
\[ u:\R\times (\R/\mathbb{Z})\to M,\quad \partial_s u+ J_{s,t}(u)(\partial_t u- X_{H_{s,t}}(u))=0 \]
is surjective.

Given regular continuation data $(\{H_{s,t}\}, \{J_{s,t}\})$ from $(H^-, J^-)$ to $(H^+, J^+)$ and 1-periodic orbits $\gamma^-$ and $\gamma^+$ of $H^-$ and $H^+,$ respectively, the set of the solutions $u:\R\times(\R/\mathbb{Z})\to M$ of the problem
\begin{align*}
    & \partial_s u + J_{s,t} (u) (\partial_t u - X_{H_{s,t}}(u))=0,\\
    & \lim_{s\to\pm\infty} u(s,t)= \gamma^\pm(t)
\end{align*}
is a finite dimensional manifold. Its 0-dimensional part is compact, and therefore, a finite set. Denote by $m(\gamma^-,\gamma^+)$ the number modulo 2 of the 0-dimensional components of this manifold. The continuation map
\[\Phi= \Phi(\{H_{s,t}\}, \{J_{s,t}\})\quad:\quad CF_\ast(H^-, J^-)\to CF_\ast(H^+, J^+)\]
is the chain map defined on the generators by
\[\Phi(\gamma^-):=\sum_{\gamma^+} m(\gamma^-, \gamma^+)\left\langle \gamma^+\right\rangle.\]
The map $HF_\ast(H^-, J^-)\to HF_\ast(H^+, J^+)$ induced by a continuation map on the homology level (this map is also called \emph{continuation map}) does not depend on the choice of continuation data from $(H^-, J^-)$ to $(H^+, J^+).$

The groups $HF_\ast(H,J)$ together with the continuation maps form a directed system of groups. As a consequence, the groups $HF_\ast(H,J)$ and $HF_\ast(H', J')$ are canonically isomorphic whenever $(H,J)$ and $(H',J')$ are (regular) Floer data for the same admissible contact Hamiltonian. Therefore, the Floer homology $HF_\ast(h)= HF_\ast(W,h)$ for an admissible contact Hamiltonian $h_t:\partial W\to\R$ is well defined. The continuation maps carry over to Floer homology for contact Hamiltonians. Due to the ``monotonicity''condition for the continuation data, the continuation map 
$HF_\ast(h)\to HF_\ast(h')$
is not well defined unless $h_t,h'_t:\partial W\to\R$ are admissible contact Hamiltonians such that $h\leqslant h',$ pointwise.

For a positive smooth function $f:\partial W\to \R^+$, the completions of the Liouville domains $W$ and $W^f$ can be naturally identified. If a Hamiltonian $H: \hat{W}= \hat{W^f}\to \R$ has the slope equal to $h$ with respect to the Liouville domain $W^f$, then it has the slope equal to $f\cdot h$ with respect to the Liouville domain $W$. Therefore, the groups $HF_\ast(W^f, h)$ and $HF_\ast(W, f\cdot h)$ are canonically isomorphic. Here, we tacitly identified $\partial W$ and $\partial W^f$ via the contactomorphism furnished by the Liouville vector field, and regarded $h$ as both the function on $\partial W$ and $\partial W^f$.

\section{Selective symplectic homology}\label{sec:SSH}

This section defines formally the selective symplectic homology $SH_\ast^{\Omega}(W)$. To this end, two sets of smooth functions on $\partial W$ are introduced : $\mathcal{H}_\Omega(\partial W)$ and $\Pi(h)$. The set $\mathcal{H}_\Omega(\partial W)$ consists of certain non-negative smooth functions on $\partial W$, and $\Pi(h)$ is a set associated to $h\in \mathcal{H}_\Omega(\partial W)$ that can be thought of as the set of perturbations. 

\begin{defn}\label{def:Hasigma}
Let $\Sigma$ be a closed contact manifold with a contact form $\alpha,$ and let $\Omega\subset \Sigma$ be an open subset. Denote by $\mathcal{H}_\Omega(\Sigma)= \mathcal{H}_\Omega(\Sigma,\alpha)$ the set of smooth ($C^\infty$) autonomous contact Hamiltonians $h:\Sigma\to[0,+\infty)$ such that
\begin{enumerate}
    \item $ \op{supp} h\subset \Omega$,\label{cond:van}
    \item $dY^h(p)=0$ for all $p\in \Sigma$ such that $h(p)=0$,
    \item the 1-periodic orbits of $h$ are constant.
\end{enumerate}
\end{defn}
In the definition above, $Y^h$ denotes the contact vector field of the contact Hamiltonian $h$. More precisely, the vector field $Y^h$ is determined by the following relations
\begin{align*}
    & \alpha(Y^h)=-h,\\
    & d\alpha(Y^h, \cdot)= dh- dh(R)\cdot \alpha,
\end{align*}
where $R$ stands for the Reeb vector field with respect to $\alpha$. The condition $dY^h(p)=0$ holds for $p\in h^{-1}(0)$ if, for instance, the Hessian of $h$ is equal to 0 at the point $p$. The set $\mathcal{H}_\Omega(\Sigma)$ is non-empty.
\begin{defn}\label{def:Pih}
Let $\Sigma$ be a closed contact manifold with a contact form $\alpha,$ let $\Omega\subset \Sigma$ be an open subset, and let $h\in\mathcal{H}_\Omega(\Sigma).$ Denote by $\Pi(h)$ the set of smooth positive functions $f:\Sigma\to\R^+$ such that the contact Hamiltonian $h+f$ has no 1-periodic orbits.
\end{defn}

The next proposition implies that $\Pi(h)$ is non-empty for $h\in\mathcal{H}_\Omega(\Sigma)$. It is also used in the proof of Lemma~\ref{lem:invlimstab} below.

\begin{prop}\label{prop:no1open}
Let $\Sigma$ be a closed contact manifold with a contact form. Let $h:\Sigma\to\R$ be a contact Hamiltonian such that $h$ has no non-constant 1-periodic orbits, and such that $dY^h(p)=0$ for all $p\in\Sigma$ at which the vector field $Y^h$ vanishes. Then, there exists a $C^2$ neighbourhood of $h$ in $C^\infty(\Sigma)$ such that the flow of $g$ has no  non-constant 1-periodic orbits for all $g$ in that neighbourhood.
\end{prop}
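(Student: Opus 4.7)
The plan is a compactness-plus-rescaling argument by contradiction. Suppose no such $C^2$-neighbourhood exists; then there is a sequence $g_n\to h$ in $C^2$ together with non-constant $1$-periodic orbits $\gamma_n:[0,1]\to\Sigma$ of $g_n$. First I would establish compactness. Since $g_n\to h$ in $C^2$, the vector fields $Y^{g_n}$ converge to $Y^h$ in $C^1$; in particular $\|Y^{g_n}\|_{C^1}$ is uniformly bounded. From $\dot\gamma_n=Y^{g_n}(\gamma_n)$ and $\ddot\gamma_n=dY^{g_n}(\gamma_n)\,Y^{g_n}(\gamma_n)$ this yields uniform $C^1$-bounds on $\gamma_n$, so after extracting a subsequence $\gamma_n$ converges in $C^1$ to a loop $\gamma_\infty$. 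Passing to the limit in the ODE gives $\dot\gamma_\infty=Y^h(\gamma_\infty)$, so $\gamma_\infty$ is a $1$-periodic orbit of $h$; by the first hypothesis it is constant at some point $p\in\Sigma$, and by the second hypothesis $dY^h(p)=0$.

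The core step is to derive a contradiction from the existence of non-constant loops $\gamma_n$ shrinking to $p$. I would fix a coordinate chart centred at $p$ and rescale by the diameter. Set $\sigma_n:=\op{diam}\gamma_n([0,1])>0$ (which tends to $0$) and $\tilde\gamma_n(t):=(\gamma_n(t)-\gamma_n(0))/\sigma_n$. Then $\tilde\gamma_n(0)=0$, $|\tilde\gamma_n(t)|\leqslant 1$ and $\op{diam}\tilde\gamma_n=1$. Using the fundamental theorem of calculus in the chart, $Y^{g_n}(\gamma_n(t))=Y^{g_n}(\gamma_n(0))+M_n(t)(\gamma_n(t)-\gamma_n(0))$ with $M_n(t):=\int_0^1 dY^{g_n}\bigl(\gamma_n(0)+s(\gamma_n(t)-\gamma_n(0))\bigr)\,ds$, and dividing by $\sigma_n$ yields the rescaled linearised equation
\[\dot{\tilde\gamma}_n(t)=c_n+M_n(t)\,\tilde\gamma_n(t),\qquad c_n:=Y^{g_n}(\gamma_n(0))/\sigma_n.\]

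Next I would show that $M_n$ and $c_n$ both tend to zero, forcing $\dot{\tilde\gamma}_n\to0$ uniformly. The chord from $\gamma_n(0)$ to $\gamma_n(t)$ sits in a ball around $p$ of radius $2\max_s|\gamma_n(s)-p|\to 0$; since $dY^h$ is continuous and vanishes at $p$, its $C^0$-norm on this ball is $o(1)$, while $\|dY^{g_n-h}\|_{C^0}\leqslant C\|g_n-h\|_{C^2}\to 0$. Hence $\|M_n\|_\infty\to 0$. Integrating the rescaled equation over $[0,1]$ and using $1$-periodicity of $\tilde\gamma_n$ gives $0=c_n+\int_0^1 M_n(t)\tilde\gamma_n(t)\,dt$, whence $|c_n|\leqslant\|M_n\|_\infty$. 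Therefore $|\dot{\tilde\gamma}_n|\leqslant 2\|M_n\|_\infty\to 0$, which bounds $\op{diam}\tilde\gamma_n$ by a quantity tending to $0$, contradicting $\op{diam}\tilde\gamma_n=1$.

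The main obstacle is controlling $M_n$ uniformly given only $C^2$-closeness: a naive Taylor expansion of $Y^h$ beyond first order would require $\|d^2Y^h\|_{C^0}$, which is not at our disposal at this level of regularity, but this is circumvented by using the mean-value representation of $dY^h$ along the chord together with the pointwise decay $dY^h\to 0$ near $p$ supplied by the hypothesis $dY^h(p)=0$. This is precisely where the second non-degeneracy assumption enters and rules out the bifurcation of non-constant $1$-periodic orbits from the (possibly highly degenerate) zero set of $Y^h$ under small perturbations.
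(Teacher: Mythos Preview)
Your proof is correct and follows the same overall strategy as the paper: argue by contradiction, pass to a subsequence of orbits converging to a constant orbit at some point $p$ with $dY^h(p)=0$, and then exploit that near $p$ the vector fields $Y^{g_n}$ have arbitrarily small Lipschitz constant. The only substantive difference is in the final step. The paper dispatches it by a direct appeal to Yorke's theorem, which bounds the minimal period of a non-constant periodic orbit of a Lipschitz vector field from below by $2\pi/L$, where $L$ is the Lipschitz constant; since $L$ can be made arbitrarily small on a neighbourhood of $p$, no $1$-periodic orbit can survive there. Your rescaling/blow-up argument is an elementary, self-contained substitute for Yorke's result: the integral identity $c_n=-\int_0^1 M_n\tilde\gamma_n$ together with $\|M_n\|_\infty\to 0$ reproves precisely the needed consequence. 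The advantage of your route is that it avoids an external reference; the paper's is brevity.
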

\begin{proof}
Assume the contrary. Then, there exist a sequence of contact Hamiltonians $h_k$ and a sequence $x_k\in\Sigma$ such that $h_k\to h$ in $C^2$ topology, such that $x_k\to x_0,$ and such that $t\mapsto \varphi_t^{h_k}(x_k)$ is a non-constant 1-periodic orbit of $h_k.$ This implies that $t\mapsto \varphi_t^h(x_0)$ is a 1-periodic orbit of $h,$ and therefore, has to be constant.  By assumptions, $dY^h(x_0)=0.$ The map
$C^\infty(\Sigma)\to\mathfrak{X}(\Sigma)$
that assigns the contact vector field to a contact Hamiltonian is continuous with respect to $C^2$ topology on $C^\infty(\Sigma)$ and $C^1$ topology on $\mathfrak{X}(\Sigma)$. Consequently (since $h_k\to h$ in $C^2$ topology), $Y^{h_k}\to Y^h$ in $C^1$ topology. Therefore, for each $L>0,$ there exists a neighbourhood $U\subset \Sigma$ of $x_0$ and $N\in\mathbb{N}$ such that $\left. Y^{h_k}\right|_{U}$ is Lipschitz with Lipschitz constant $L$ for all $k\geqslant N.$ For $k$ big enough, the loop $t\mapsto \varphi_t^{h_k}(x_k)$ is contained in the neighbourhood $U.$ This contradicts \cite{yorke1969periods} because for $L$ small enough there are no non-constant 1-periodic orbits of $h_k$ in $U.$
\end{proof}

The following definition introduces the selective symplectic homology.
\begin{defn}\label{def:SSH}
Let $W$ be a Liouville domain, and let $\Omega\subset \partial W$ be an open subset of the boundary $\Sigma:=\partial W.$ The \emph{selective symplectic homology} with respect to $\Omega$ is defined to be
\[ SH_\ast^\Omega(W):=\underset{h\in\mathcal{H}_\Omega(\Sigma)}{\lim_{\longrightarrow}}\:\:\underset{f\in\Pi(h)}{\lim_{\longleftarrow}}\: HF_\ast(h+f). \]
The limits are taken with respect to the continuation maps.
\end{defn}

Given $h\in\mathcal{H}_\Omega(\Sigma),$ Proposition~\ref{prop:no1open} implies that for $f:\Sigma\to\R^+$ smooth and small enough (with respect to the $C^2$ topology), the contact Hamiltonian $h+f$ has no 1-periodic orbits. As a consequence, the groups $HF_\ast(h+f_1)$ and $HF_\ast(h+f_2)$ are canonically isomorphic for $f_1$ and $f_2$ sufficiently small. In other words, the inverse limit
\[\underset{f\in\Pi(h)}{\lim_{\longleftarrow}} HF_\ast (h+f)\]
stabilizes for $h\in\mathcal{H}_\Omega(W)$. This is proven in the next lemma. 

\begin{lem}\label{lem:invlimstab}
Let $W$ be a Liouville domain, let $\Omega\subset \partial W$ be an open subset, and let $h\in\mathcal{H}_\Omega(W)$. Then, there exists an open convex neighbourhood $U$ of 0 (seen as a constant function on $\partial W$) in $C^2$ topology such that the natural map
\[\underset{f\in\Pi(h)}{\lim_{\longleftarrow}} HF_\ast (h+f) \to HF_\ast(h+g) \]
is an isomorphism for all $g\in  C^\infty(\partial W, \R^+)\cap U$. 
\end{lem}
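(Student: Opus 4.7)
The plan is to exploit Proposition~\ref{prop:no1open} to produce an open convex $C^2$-neighborhood $U_0$ of the zero function in $C^\infty(\partial W)$ such that $h+f$ has no 1-periodic orbits for every $f\in U_0$; in particular $U_0\cap C^\infty(\partial W,\R^+)\subset\Pi(h)$ and the Floer homology $HF_\ast(h+g)$ is defined for every $g$ in this intersection. Convexity is crucial: given $g_1,g_2\in U_0$, the entire segment $g_s:=(1-s)g_1+sg_2$ stays inside $U_0$, so $h+g_s$ remains free of 1-periodic orbits for every $s\in[0,1]$. I would then take the lemma's neighborhood $U$ to be an open convex $C^2$-neighborhood of $0$ small enough that $U+U\subset U_0$.

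The heart of the argument is to show that for $g_1,g_2\in U_0\cap C^\infty(\partial W,\R^+)$ with $g_1\leqslant g_2$, the continuation map $\Phi\colon HF_\ast(h+g_1)\to HF_\ast(h+g_2)$ is an isomorphism. I would pick regular Floer data $(H^1,J^1)$ and $(H^2,J^2)$ for $h+g_1$ and $h+g_2$ that coincide on a compact set $K\subset\hat{W}$ containing all their 1-periodic orbits (which is possible precisely because no $h+g_i$ admits a 1-periodic orbit, so no Hamiltonian 1-periodic orbits can appear on the conical end), and then connect them by a monotone homotopy $(H_s,J_s)$ that is $s$-independent on $K$ and whose slope outside $K$ interpolates via $g_s\in U_0$. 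Because none of the contact Hamiltonians $h+g_s$ admits a 1-periodic orbit on $\partial W$, the standard integrated maximum principle on the conical end confines all Floer continuation trajectories to $K$. There the homotopy is $s$-independent, so the continuation map sends each generator to itself and is an isomorphism at the chain level.

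For arbitrary $g_1,g_2\in U\cap C^\infty(\partial W,\R^+)$ without a pointwise order, the element $g_3:=g_1+g_2$ lies in $U_0\cap C^\infty(\partial W,\R^+)\subset\Pi(h)$ by the choice of $U$ and dominates both $g_1$ and $g_2$ pointwise. Applying the previous step twice produces canonical isomorphisms $HF_\ast(h+g_1)\cong HF_\ast(h+g_3)\cong HF_\ast(h+g_2)$. Consequently $\{HF_\ast(h+f):f\in U\cap C^\infty(\partial W,\R^+)\}$ is a subsystem of $\{HF_\ast(h+f):f\in\Pi(h)\}$ in which every continuation map is an isomorphism, and it is final in the projective system as $f\to 0^+$; the natural map from the inverse limit into any $HF_\ast(h+g)$ within this subsystem is therefore forced to be an isomorphism.

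The principal technical point I expect to have to substantiate is the confinement of the Floer continuation trajectories to $K$: one needs the uniform absence of 1-periodic orbits of the family $\{h+g_s\}_{s\in[0,1]}$ to feed into the maximum principle and rule out trajectories escaping into the conical end in this slightly non-standard setting where the slope deforms with $s$. The ancillary step of perturbing $(H^1,J^1)$ and $(H^2,J^2)$ so that they agree on a common compact set containing all their 1-periodic orbits, while preserving regularity, is a routine but delicate perturbation that must also be carried out carefully.
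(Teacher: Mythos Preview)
Your proposal is correct and follows the same strategy as the paper: invoke Proposition~\ref{prop:no1open} to obtain a convex $C^2$-neighborhood of $0$ on which $h+f$ is admissible for positive $f$, then use convexity to interpolate monotonically through admissible slopes and conclude that the continuation maps between comparable elements are isomorphisms, which forces the inverse limit to stabilize. The paper dispatches the isomorphism step by citing \cite[Theorem~1.3]{uljarevic2022hamiltonian} rather than arguing directly via a no-escape/maximum-principle argument as you do, and it does not introduce your auxiliary shrinking $U+U\subset U_0$ to handle unordered pairs, since isomorphisms between comparable elements of a cofinal subset already suffice.
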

\begin{proof}
Proposition~\ref{prop:no1open} implies that there exists a convex $C^2$ neighbourhood $U$ of the constant function $\partial W\to \R: p\mapsto 0$ such that $h+ f$ has no non-constant 1-periodic orbits if $f\in U$. Since $h+f$ is positive for a positive function $f\in U$, it does not have any constant orbits either (the corresponding vector field is nowhere 0). Hence, $h+f$ has no 1-periodic orbits for all positive functions $f:\partial W\to \R^+$ from $U$. This, in particular, implies $ \mathcal{O}:=C^\infty(\partial W, \R^+)\cap U \subset \Pi(h).$
The set $\mathcal{O}$ is also convex. Therefore, 
$(1-s)\cdot f_a + s\cdot f_b\in\mathcal{O}$
for all $f_a, f_b\in\mathcal{O}$ and $s\in[0,1]$. If, additionally, $f_a\leqslant f_b$, then  $h+ (1-s)\cdot f_a + s\cdot f_b$ is an increasing family (in $s$-variable) of admissible contact Hamiltonians. Theorem~1.3 from \cite{uljarevic2022hamiltonian} asserts that the continuation map
$HF_\ast(h+f_a)\to HF_\ast(h+f_b)$
is an isomorphism in this case. This implies the claim of the lemma.
\end{proof}

The set $U$ from Lemma~\ref{lem:invlimstab} is not unique. For technical reasons, it is useful to choose one specific such set (we will denote it by $\mathcal{U}(h)$)\label{p:U} for a given contact Hamiltonian $h\in\mathcal{H}_\Omega(\partial W)$. The construction of $\mathcal{U}(h)$ follows. Let $\psi_j: V_j\to\partial W$ be charts on $\partial W$ and let $K_j\subset \psi(V_j)$ be compact subsets, $j\in\{1,\ldots, m\}$, such that $\bigcup_{j=1}^m K_j=\partial W$. Denote by $\norm{\cdot}_{C^2}$ the norm on $C^\infty(\partial W, \R)$ defined by
\[\norm{f}_{C^2}:= \underset{i\in\{0,1,2\}}{\max_{j\in\{1,\ldots, m\}}}\max_{K_j} \norm{D^i(f\circ\psi_j)}. \]
The norm $\norm{\cdot}_{C^2}$ induces the $C^2$ topology on $C^\infty(\partial W, \R)$. Denote by $\mathcal{B}(\varrho)\subset C^\infty(\partial W, \R)$ the open ball with respect to $\norm{\cdot}_{C^2}$ centered at 0 of radius $\varrho$. Define $\mathcal{U}(h)$ as the union of the balls $\mathcal{B}(\varrho)$ that have the following property: the contact Hamiltonian $h+f$ has no non-constant 1-periodic orbits for all $f\in\mathcal{B}(\varrho)$. The set $\mathcal{U}(h)$ is open as the union of open subsets. It is convex as the union of nested convex sets. And, it is non-empty by Proposition~\ref{prop:no1open}. The subset of $\mathcal{U}(h)$ consisting of strictly positive functions is denoted by $\mathcal{O}(h)$, i.e.
$\mathcal{O}(h):= \mathcal{U}(h)\cap C^\infty(\partial W, \R).$\label{p:O}

\section{Behaviour under direct limits}

The next theorem claims that the selective symplectic homology behaves well with respect to direct limits.

\begin{theorem}\label{thm:limitsh}
Let $(W,\lambda)$ be a Liouville domain, and let $\Omega_1,\Omega_2,\ldots$ be a sequence of open subsets of $\partial W$ such that $\Omega_k\subset \Omega_{k+1}$ for all $k\in\mathbb{N}.$ Denote $\Omega:=\bigcup_{k}\Omega_k.$ Then, the map
\begin{align*}
    & \mathfrak{P} : \lim_{k\to +\infty} SH_\ast^{\Omega_k}(W)\to SH_\ast^\Omega(W),
\end{align*}
furnished by continuation maps, is an isomorphism.
\end{theorem}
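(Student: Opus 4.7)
The plan is to prove the theorem by constructing an explicit inverse map $\mathfrak{Q} : SH_\ast^\Omega(W) \to \lim_{k\to\infty} SH_\ast^{\Omega_k}(W)$ via a truncation/approximation argument. First I would invoke Lemma~\ref{lem:invlimstab} to collapse the inner inverse limits, so that both sides of $\mathfrak{P}$ become directed colimits of individual Floer homology groups $HF_\ast(h+f_h)$, with $h$ ranging over $\mathcal{H}_\Omega(\Sigma)$ (respectively $\mathcal{H}_{\Omega_k}(\Sigma)$) and $f_h \in \mathcal{O}(h)$ a chosen small positive perturbation. In this reformulation, $\mathfrak{P}$ sends $[h_k, \alpha]$ (with $h_k \in \mathcal{H}_{\Omega_k}(\Sigma)$ and $\alpha \in HF_\ast(h_k+f_{h_k})$) to $[\tilde h, \Phi \alpha] \in SH_\ast^\Omega(W)$, where $\tilde h \in \mathcal{H}_\Omega(\Sigma)$ dominates $h_k$ pointwise and $\Phi$ is the resulting Floer continuation map.

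For the key construction, given a bounded $h \in \mathcal{H}_\Omega(\Sigma)$, I would take the compact exhaustion $K_k := \{h \geq 1/k\} \subset \Omega$ of $\Omega$; after passing to a subsequence of the $\Omega_j$'s, one may assume $K_k \subset \Omega_k$. I would then build $h^{(k)} \in \mathcal{H}_{\Omega_k}(\Sigma)$ that agrees with $h$ on $K_k$, vanishes to infinite order on $\Sigma \setminus \Omega_k$, and takes values in $[0, 1/k]$ on the transition region $\Omega_k \setminus K_k$; after an auxiliary $C^2$-small perturbation furnished by Proposition~\ref{prop:no1open}, $h^{(k)}$ has only constant $1$-periodic orbits. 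Then $h^{(k)} \leq h$ and $\|h-h^{(k)}\|_\infty \leq 1/k \to 0$. For compatibly chosen perturbations $f^{(k)} \in \mathcal{O}(h^{(k)})$, both contact Hamiltonians $h+f_h$ and $h^{(k)}+f^{(k)}$ are strictly positive on $\partial W$ and hence possess no $1$-periodic orbits; all $1$-orbits of the associated Floer data on $\hat W$ therefore lie in the compact interior of $W$, where the extending Hamiltonian can be arranged to be independent of $k$. A maximum-principle argument with SFT-type almost complex structures confines Floer cylinders to a $k$-uniform compact region, and then a Gromov-compactness argument shows that for $k$ large the continuation map
\[
\Phi^{(k)}: HF_\ast(h^{(k)}+f^{(k)}) \longrightarrow HF_\ast(h+f_h)
\]
is an isomorphism.

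Setting $\mathfrak{Q}([h,\alpha]) := [(k, h^{(k)}, (\Phi^{(k)})^{-1}\alpha)]$ for such $k$ yields a well-defined map; independence of the choice of $k$ uses that for $k \leq k'$ large the factorization $HF_\ast(h^{(k)}+f^{(k)}) \to HF_\ast(h^{(k')}+f^{(k')}) \to HF_\ast(h+f_h)$ consists of isomorphisms. The identity $\mathfrak{P}\circ \mathfrak{Q} = \op{id}$ is then tautological, since $\Phi^{(k)}$ is exactly the continuation map that $\mathfrak{P}$ uses when one picks $\tilde h := h$ as the dominator of $h^{(k)}$; and $\mathfrak{Q}\circ \mathfrak{P} = \op{id}$ follows symmetrically, by running the same approximation with an element of $\mathcal{H}_{\Omega_k}(\Sigma)$ in place of the generic $h \in \mathcal{H}_\Omega(\Sigma)$. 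The hard part will be the stability claim that $\Phi^{(k)}$ is an isomorphism for $k$ large: although intuitively clear from the $C^0$-convergence $h^{(k)} \to h$ combined with the absence of $1$-periodic orbits on $\partial W$, a careful verification requires $k$-uniform $C^0$-estimates on Floer trajectories (adapting the maximum principle to the selective setting, where the slope vanishes on part of $\partial W$) together with a bijection of $1$-orbits and moduli spaces in the limit — precisely the kind of technical input for which Section~\ref{sec:details} is designed.
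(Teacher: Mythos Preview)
Your plan hinges on showing that the continuation map $\Phi^{(k)}: HF_\ast(h^{(k)}+f^{(k)}) \to HF_\ast(h+f_h)$ is an isomorphism for large $k$, and you rightly flag this as the hard part. But the justification you sketch does not go through as written: $C^0$-convergence $h^{(k)}\to h$ combined with admissibility at the two endpoints does not control the monotone homotopy joining them, which may pass through contact Hamiltonians with $1$-periodic orbits. The mechanism that actually forces such a continuation map to be an isomorphism (as used in Lemma~\ref{lem:invlimstab}, via \cite[Theorem~1.3]{uljarevic2022hamiltonian}) is a homotopy through \emph{admissible} contact Hamiltonians, and securing that requires $C^2$-smallness of $h-h^{(k)}$, not merely $\|h-h^{(k)}\|_\infty\leqslant 1/k$. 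Your maximum-principle/Gromov-compactness sketch does not obviously address this either: even if the orbits and cylinders for the endpoint Hamiltonians live in a fixed compact region, the continuation map is computed from an $s$-dependent Floer equation whose intermediate slopes differ from both endpoints. And Section~\ref{sec:details} is not the right reference here --- it establishes nonemptiness of $\mathcal{H}_\Omega(\Sigma)$ and $\mathcal{O}(h)$, not any stability statement for continuation maps.

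The paper sidesteps this difficulty entirely with a cofinality argument that never inverts a single continuation map. The key point is to approximate from \emph{above} rather than below: given $h\in\mathcal{H}_\Omega(\partial W)$ and $a>0$ with the $C^2$-ball of radius $a$ contained in $\mathcal{U}(h)$, one finds (for $k$ large) $g\in\mathcal{H}_{\Omega_k}(\partial W)$ with $\|h-g\|_{C^2}<a/4$, and then the constants $\varepsilon=a/4\in\mathcal{O}(h)$ and $\delta=3a/4\in\mathcal{O}(g)$ yield the pointwise inequality $h+\varepsilon\leqslant g+\delta$. This produces a forward continuation map $HF_\ast(h+\varepsilon)\to HF_\ast(g+\delta)\to SH_\ast^{\Omega_k}(W)$, and hence a map $\mathfrak{Q}:SH_\ast^\Omega(W)\to\lim_k SH_\ast^{\Omega_k}(W)$ built entirely from forward continuation maps. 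Both $\mathfrak{P}\circ\mathfrak{Q}$ and $\mathfrak{Q}\circ\mathfrak{P}$ are then also furnished by continuation maps, and a self-map of a directed colimit induced by its own structure maps is automatically the identity.
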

\begin{proof}
Let $h$ be an arbitrary contact Hamiltonian in $\mathcal{H}_\Omega(\partial W)$. Since $\op{supp} h$ is a compact subset of $\Omega$, and since $\bigcup\Omega_k=\Omega$, there exists $k\in\mathbb{N}$ such that $\op{supp} h\subset \Omega_k$. For such a $k$, we have $h\in\mathcal{H}_{\Omega_k}(\partial W)$. In other words, $\bigcup_k \mathcal{H}_{\Omega_k}(\partial W)= \mathcal{H}_\Omega(\partial W)$. The theorem now follows from the next abstract lemma.
\end{proof}

The following lemma was used in the proof of Theorem~\ref{thm:limitsh}.

\begin{lem}
    Let $(P,\leqslant)$ be a directed set and let $P_1\subset P_2\subset\cdots\subset P$ be subsets of $P$ such that $(P_j,\leqslant)$ is a directed set for all $j\in \mathbb{N}$, and such that $\bigcup_j P_j= P$. Let $\{G_a\}_{a\in P}$ be a directed system over $P$. Then, there exists a canonical isomorphism
    \[\underset{j}{\lim_{\longrightarrow}}\:\underset{a\in P_j}{\lim_{\longrightarrow}}\: G_a\:\to\: \underset{a\in P}{\lim_{\longrightarrow}}\: G_a.\]
\end{lem}
\begin{proof}
    Denote by $f_a^b:G_a\to G_b$, $a\leqslant b$ the morphisms of the directed system $\{G_a\}$. Denote by 
    \[\phi_a^j: G_a\to\underset{b\in P_j}{\lim_{\longrightarrow}} G_b\] 
    the canonical map, defined if $a\in P_j$. Since $\phi_b^j\circ f_a^b=\phi_a^j$ whenever $a\leqslant b$ and $a,b\in P_j$, the morphisms $\{\phi_a^j\}_{a\in P_i}$ induce a morphism
    \[F_i^j: \underset{a\in P_i}{\lim_{\longrightarrow}} G_a \to \underset{a\in P_j}{\lim_{\longrightarrow}} G_a \]
    for positive integers $i\leqslant j$. The morphisms $\{F_i^j\}_{i\leqslant j}$ make $\displaystyle \left\{\underset{a\in P_j}{\lim_{\longrightarrow}} G_a\right\}_{j\in\mathbb{N}}$ into a directed system indexed by $(\mathbb{N}, \leqslant)$. Denote by
    \[\Phi_j: \underset{a\in P_j}{\lim_{\longrightarrow}} G_a \to \underset{j\in \mathbb{N}}{\lim_{\longrightarrow}}\: \underset{a\in P_j}{\lim_{\longrightarrow}} G_a\]
    the canonical map. 

    We will prove the lemma by showing that $\displaystyle \underset{j\in \mathbb{N}}{\lim_{\longrightarrow}} \underset{a\in P_j}{\lim_{\longrightarrow}} G_a$ together with the maps $\Phi_j\circ\phi_a^j$, $a\in P$ satisfies the universal property of the direct limit. Let $\left(Y, \{\psi_a\}_{a\in P}\right)$ be a target, i.e. $\{\psi_a: G_a\to Y\}_a$ is a collection of morphisms that satisfy $\psi_b\circ f_a^b=\psi_a$ for all $a,b\in P$ such that $a\leqslant b$.
    Since $\left(Y, \{\psi_a\}_{a\in P_j}\right)$ is a target for the directed system $\{G_a\}_{a\in P_j}$, the universal property of the direct limit implies that there exists a unique morphism
    \[\Psi_j: \underset{a\in P_j}{\lim_{\longrightarrow}} G_a\to Y\]
    such that $\Psi_j\circ \phi_a^j= \psi_a$ for all positive integers $i\leqslant j$. By applying the universal property again, we conclude that there exists a unique morphism
    \[\Psi : \underset{j}{\lim_{\longrightarrow}}\:\underset{a\in P_j}{\lim_{\longrightarrow}}\: G_a\to Y \]
    such that $\Psi\circ\Phi_j=\Psi_j$. Since 
    \[\Psi\circ\Phi_j\circ\phi_a^j= \Psi_j\circ \phi_a^j=\psi_a,\]
    this finishes the proof.
    
\end{proof}

\section{Conjugation isomorphisms}\label{sec:conjugationisomorphisms}

Let $(M,\lambda)$ be a Liouville domain of finite type. The group of symplectomorphisms $\psi :M\to M$ that preserve the Liouville form outside of a compact subset is denoted by $\op{Symp}^\ast(M,\lambda)$. If $M=\hat{W}$ is the completion of a Liouville domain $(W, \lambda)$, then for $\psi\in \op{Symp}^\ast(M, \lambda)$ there exist a contactomorphism $\varphi:\partial W \to\partial W$ and a positive smooth function $f:\partial W\to\R^+$ such that
\[ \psi(x,r)= (\varphi(x), r\cdot f(x)), \]
for $x\in\partial W$ and $r\in\R^+$ large enough. The contactomorphism $\varphi$ is called the \emph{ideal restriction} of $\psi$. To an element $\psi\in\op{Symp}^\ast(M, \lambda)$, one can associate isomorphisms, called \emph{conjugation isomorphisms},
\begin{align*}
    & \mathcal{C}(\psi) : HF_\ast(H,J) \to HF_\ast(\psi^\ast H, \psi^\ast J),
\end{align*}
where $(H,J)$ is regular Floer data. The isomorphisms $\mathcal{C}(\psi)$ are defined on the generators by
\[\gamma\mapsto \psi^\ast \gamma =\psi^{-1}\circ \gamma.\]
They are isomorphisms already on the chain level, and already on the chain level, they commute with the continuation maps.

\begin{prop}
Let $(M,\lambda)$ be the completion of a Liouville domain $(W, \lambda)$, let $\psi\in\op{Symp}^\ast(M,\lambda)$, and let $\varphi:\partial W\to \partial W$ be the ideal restriction of $\psi$. Then, the conjugation isomorphisms with respect to $\psi$ give rise to isomorphisms (called the same)
\begin{align*}
    &\mathcal{C}(\psi) : SH_\ast^{\Omega}(W)\to SH_\ast^{\varphi^{-1}(\Omega)}(W),
\end{align*}
for every open subset $\Omega\subset \partial W$.
\end{prop}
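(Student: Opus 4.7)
The plan is to induce the desired isomorphism from the chain-level conjugation isomorphism $\mathcal{C}(\psi)$ by exhibiting compatible bijections between the indexing data used to define $SH_\ast^\Omega(W)$ and those used to define $SH_\ast^{\varphi^{-1}(\Omega)}(W)$. On the conical end one has $\psi(x,r) = (\varphi(x), r\cdot f(x))$, and preservation of $\lambda = r\alpha$ outside a compact set forces the conformal identity $\varphi^\ast\alpha = (1/f)\cdot \alpha$. A direct computation then shows that if a Hamiltonian $H$ on $\hat{W}$ has slope $h$ with respect to $W$, then its pullback $\psi^\ast H$ has slope $h^\psi := f\cdot(h\circ\varphi)$ with respect to $W$. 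In particular, the chain-level conjugation realizes an isomorphism $HF_\ast(W,h)\to HF_\ast(W, h^\psi)$.

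First, I would verify that $h\mapsto h^\psi$ restricts to a bijection $\mathcal{H}_\Omega(\partial W)\to \mathcal{H}_{\varphi^{-1}(\Omega)}(\partial W)$ and that $g\mapsto g^\psi := f\cdot(g\circ\varphi)$ restricts to a bijection $\Pi(h)\to \Pi(h^\psi)$. The support condition is immediate from positivity of $f$. For the remaining conditions in Definition~\ref{def:Hasigma}, the key identity is $\varphi^\ast Y^h = Y^{h^\psi}$: the pullback by the contactomorphism $\varphi$ intertwines the two contact Hamiltonian vector fields together with their linearizations at zeros, so condition~\ref{cond:dervan} is transported by $\varphi$, and the intertwining of the time-$1$ flows ensures that all $1$-periodic orbits of $h^\psi$ are constant. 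Applied to $h+g$, the same identity shows that $h+g$ has no $1$-periodic orbits if and only if $h^\psi + g^\psi$ has none, giving the bijection on perturbation sets.

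Next, I would pass to the limits. Since $\mathcal{C}(\psi)$ commutes with continuation maps already at the chain level, the induced isomorphisms $HF_\ast(h+g)\to HF_\ast(h^\psi+g^\psi)$ commute both with the continuation maps indexed by $g\leqslant g'$ in $\Pi(h)$ and with those indexed by monotone pairs $h_1\leqslant h_2$ in $\mathcal{H}_\Omega(\partial W)$. Taking first the inverse limit over $g\in\Pi(h)$ and then the direct limit over $h\in\mathcal{H}_\Omega(\partial W)$ therefore produces the desired isomorphism $\mathcal{C}(\psi):SH_\ast^\Omega(W)\to SH_\ast^{\varphi^{-1}(\Omega)}(W)$.

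The main obstacle I expect is checking that the transformation $h\mapsto h^\psi$ is compatible with the open neighborhoods $\mathcal{U}(h)$ and $\mathcal{O}(h)$ introduced on page~\pageref{p:U}, through which the inverse limit over $\Pi(h)$ stabilizes. This amounts to a cofinality statement: one must show that $g\mapsto g^\psi$ sends arbitrarily $C^2$-small perturbations to arbitrarily $C^2$-small perturbations, so that the stabilized inverse limit on one side is identified with the stabilized inverse limit on the other. Once this bookkeeping is in place, the rest of the argument is a formal consequence of the functoriality of inverse and direct limits together with the chain-level compatibility of $\mathcal{C}(\psi)$ with continuation maps that has already been established.
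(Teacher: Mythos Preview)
Your proposal is correct and follows essentially the same approach as the paper: identify how the slope transforms under $\psi^\ast$, check that this induces compatible maps on the index sets $\mathcal{H}_\Omega(\partial W)$ and $\Pi(h)$, and then pass to the limits using the chain-level commutativity of $\mathcal{C}(\psi)$ with continuation maps. The paper's proof is terser---it simply asserts the inclusions $g\cdot h\circ\varphi\in\mathcal{H}_{\varphi^{-1}(\Omega)}(\partial W)$ and $g\cdot f\circ\varphi\in\Pi(g\cdot h\circ\varphi)$ without spelling out the intertwining identity $\varphi^\ast Y^h = Y^{h^\psi}$---but the underlying mechanism is exactly what you describe.

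One remark: the ``main obstacle'' you anticipate is not actually present. The definition of $SH_\ast^\Omega(W)$ takes the inverse limit over the full set $\Pi(h)$, not over $\mathcal{O}(h)$; the sets $\mathcal{U}(h)$ and $\mathcal{O}(h)$ appear only in Lemma~\ref{lem:invlimstab} to show that this inverse limit stabilizes. Since $g\mapsto g^\psi = f\cdot(g\circ\varphi)$ is an order-preserving bijection $\Pi(h)\to\Pi(h^\psi)$ (multiplication by a positive function and precomposition with a diffeomorphism preserve pointwise inequalities), the inverse systems are isomorphic as directed systems, and no separate cofinality argument for $\mathcal{O}(h)$ is needed.
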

\begin{proof}
Let $h\in\mathcal{H}_\Omega(\partial W)$, let $f\in \Pi(h)$, and let $(H,J)$ be Floer data for $W$ and for the contact Hamiltonian $h+f$. The Floer data $(\psi^\ast H, \psi^\ast J)$ corresponds to the contact Hamiltonain $g\cdot (h+f)\circ \varphi$, where $g:\partial W\to \R^+$ is a certain positive smooth function. Moreover, $g\cdot h\circ\varphi \in \mathcal{H}_{\varphi^{-1}(\Omega)}(W)$ and $g\cdot f\circ \varphi \in \Pi(g\cdot h\circ\varphi).$ Since the conjugation isomorphisms commute with the continuation maps and since the relations above hold, the conjugation isomorphisms give rise to an isomorphism
\[\mathcal{C}(\psi) : SH_\ast^{\Omega}(W) \to SH_\ast^{\varphi^{-1}(\Omega)}(W).\]
\end{proof}

Now, the proof of Theorem~\ref{thm:conjVSsont} from the introduction follows directly.

\settheoremtag{\ref{thm:conjVSsont}}
\begin{theorem}
Let $W$ be a Liouville domain, let $\psi:\hat{W}\to\hat{W}$ be a symplectomorphism that preserves the Liouville form outside of a compact set, and let $\varphi:\partial W\to\partial W$ be the ideal restriction of $\psi$. Let $\Omega_a\subset \Omega_b\subset \partial W$ be open subsets. Then, the following diagram, consisting of conjugation isomorphisms and continuation maps, commutes
\[\begin{tikzcd}
SH_\ast^{\Omega_a}(W) \arrow{r}{\mathcal{C}(\psi)}\arrow{d}{}& SH_\ast^{\varphi^{-1}(\Omega_a)}(W)\arrow{d}{}\\
SH_\ast^{\Omega_b}(W) \arrow{r}{\mathcal{C}(\psi)}& SH_\ast^{\varphi^{-1}(\Omega_b)}(W).
\end{tikzcd}\]
\end{theorem}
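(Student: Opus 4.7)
The plan is to reduce the commutativity of the diagram from selective symplectic homology to a commutativity statement on the underlying Hamiltonian loop Floer homology groups, where the identity already holds at the chain level (as noted just before the proposition on conjugation isomorphisms).

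First I would unpack the two sides of the square. The horizontal arrow $\mathcal{C}(\psi)$ was constructed in the preceding proposition: for each $h\in\mathcal{H}_\Omega(\partial W)$ and $f\in\Pi(h)$, one picks Floer data $(H,J)$ on $\hat W$ with slope $h+f$, and then $(\psi^\ast H,\psi^\ast J)$ is Floer data with contact slope $g\cdot(h+f)\circ\varphi$, where $g:\partial W\to\R^+$ comes from the ideal expansion of $\psi$; the chain-level assignment $\gamma\mapsto\psi^{-1}\circ\gamma$ furnishes the isomorphism $HF_\ast(H,J)\to HF_\ast(\psi^\ast H,\psi^\ast J)$, and passing through the double limit produces $\mathcal{C}(\psi):SH_\ast^\Omega(W)\to SH_\ast^{\varphi^{-1}(\Omega)}(W)$. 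The vertical arrow $\Phi$ is induced, via cofinality, by comparing $h_a\in\mathcal{H}_{\Omega_a}(\partial W)$ with some $h_b\in\mathcal{H}_{\Omega_b}(\partial W)$ satisfying $h_a+\varepsilon\leqslant h_b+\delta$ and applying the monotone continuation maps $HF_\ast(h_a+\varepsilon)\to HF_\ast(h_b+\delta)$.

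Next I would invoke the chain-level compatibility recalled in Section~\ref{sec:conjugationisomorphisms}: for any pair of regular Floer data $(H^-,J^-)$ and $(H^+,J^+)$ and any regular monotone homotopy between them, pulling back by $\psi$ gives a regular monotone homotopy from $(\psi^\ast H^-,\psi^\ast J^-)$ to $(\psi^\ast H^+,\psi^\ast J^+)$, and the bijection $\gamma\mapsto\psi^{-1}\circ\gamma$ on generators intertwines the two continuation chain maps. This already produces a commutative square at the level of $HF_\ast$ for a single choice of slopes $h_a+\varepsilon$ and $h_b+\delta$: the conjugation identifies this square with the analogous square for $g\cdot(h_a+\varepsilon)\circ\varphi$ and $g\cdot(h_b+\delta)\circ\varphi$, and the latter inequality is preserved because $g>0$, so the pulled-back continuation map sits in the directed systems defining $SH_\ast^{\varphi^{-1}(\Omega_a)}(W)\to SH_\ast^{\varphi^{-1}(\Omega_b)}(W)$.

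Finally I would assemble the squares into the limit. The argument in the proof of Theorem~\ref{thm:limitsh}, together with the cofinality of the subsystems $\mathcal{O}(h)$ in $\Pi(h)$, shows that the vertical maps in the theorem can be computed using any compatible choice of $(h_a,\varepsilon)$ and $(h_b,\delta)$ with $h_a+\varepsilon\leqslant h_b+\delta$. Since $\psi^\ast$ sends such a compatible choice to a compatible choice in the target system, and the squares commute at each finite stage, the square of limits commutes as well — taking a direct limit over $h$ and an inverse limit over $f$ preserves commutativity of a natural transformation between directed/inverse systems.

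The main obstacle I anticipate is bookkeeping rather than geometry: one must check that the cofinal systems used to define $SH_\ast^{\Omega_a}$ and $SH_\ast^{\Omega_b}$ can be chosen simultaneously (so that each element in the source system admits a preferred partner in the target system compatible with both $\mathcal{C}(\psi)$ and the monotone inequality), and that the identifications $\mathcal{O}(h)\subset\Pi(h)$ on page~\pageref{p:O} behave well under multiplication by the positive function $g$. Once this is verified, the commutativity of the theorem's diagram follows formally from the chain-level commutativity plus the naturality of direct and inverse limits.
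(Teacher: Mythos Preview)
Your proposal is correct and follows essentially the same approach as the paper: both reduce the commutativity to the already-established chain-level fact that conjugation isomorphisms commute with continuation maps on $HF_\ast(H,J)$, and then pass to the double limit. The paper's proof is in fact a single sentence invoking precisely this reduction, so your write-up is if anything more detailed than what the paper provides; the bookkeeping concerns you raise about cofinality and the behaviour of $\mathcal{O}(h)$ under multiplication by $g$ are not addressed explicitly in the paper either.
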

\begin{proof}
The proof follows directly from the commutativity of the conjugation isomorphisms and the continuation maps on the level of $HF_\ast(H,J)$.
\end{proof}

\section{Selective symplectic homology for a Darboux chart}\label{sec:darboux}

This section proves that sufficiently small open subsets on the boundary of a Liouville domain have finite dimensional selective symplectic homology.
Let $a_1, \ldots, a_n, b\in\R^+$. The contact polydisc $P=P(a_1,\ldots, a_n, b)$ is a subset of the standard contact $\R^{2n+1}$ (endowed with the contact form $dz + \sum_{j=1}^n(x_jdy_j -y_jdx_j)$) that is given by
\[P:= \left\{ (x,y,z)\in\R^n\times\R^n\times\R\:|\: z^2\leqslant b^2\:\&\: (\forall j\in\{1,\ldots, n\})\: x_j^2+y_j^2\leqslant a_j^2 \right\}.\]

\begin{theorem}\label{thm:sshdarboux}
Let $W$ be a Liouville domain and let $P\subset \partial W$ be a contact polydisc in a Darboux chart. Then, the continuation map
\[SH_\ast^{\emptyset}(W)\to SH_\ast^{\op{int}P}(W)\]
is an isomorphism.
\end{theorem}

The next lemma is used in the proof of Theorem~\ref{thm:sshdarboux}.
\begin{lem}\label{lem:bump}
Let $\alpha := dz + \sum_{j=1}^n (x_j dy_j - y_j dx_j)$ be the standard contact form on $\R^{2n+1}$. Denote by $(r_j, \theta_j)$ polar coordinates in the $(x_j, y_j)$-plane, $j=1,\ldots, n$. Let $h:\R^{2n+1}\to [0,+\infty)$ be a contact Hamiltonian of the form
\[h(r, \theta, z):= \varepsilon + g(z)\cdot \prod_{j=1}^n f_j(r_j),\]
where $\varepsilon\in\R^+$, $g:\R\to [0,+\infty)$ is a smooth function, and $f_j:[0,+\infty)\to [0,+\infty)$ is a (not necessarily strictly) decreasing smooth function, $j=1,\ldots, n$. Then, the $z$-coordinate strictly decreases along the trajectories of the contact Hamiltonian $h$ (with respect to the contact form $\alpha$).
\end{lem}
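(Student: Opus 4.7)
The plan is to compute $dz(X_h)$ explicitly in the polar coordinates $(r_j,\theta_j)$ and read off its sign from the sign hypotheses on $\varepsilon$, $g$, $f_j$, and $f_j'$. First, a direct computation shows that in polar coordinates
\[\alpha \;=\; dz + \sum_{j=1}^n r_j^2\, d\theta_j, \qquad d\alpha \;=\; \sum_{j=1}^n 2 r_j\, dr_j\wedge d\theta_j, \qquad R_\alpha \;=\; \partial_z.\]
Writing the unknown field as $X_h = a\partial_z + \sum_j(b_j\partial_{r_j}+c_j\partial_{\theta_j})$ with smooth coefficients $a,b_j,c_j$, and expanding the two defining equations of the contact Hamiltonian vector field in the cobasis $\{dz, dr_j, d\theta_j\}$, the $dr_j$-components of $\iota_{X_h}d\alpha$ and $dh - dh(R_\alpha)\alpha$ (in the sign convention of the paper, where positive contact Hamiltonians make the $z$-coordinate decrease) force $c_j = -\tfrac{1}{2r_j}\partial_{r_j}h$. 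The normalization $\alpha(X_h) = -h$ then yields the closed formula
\[\dot z \;=\; a \;=\; -h + \tfrac{1}{2}\sum_{j=1}^n r_j\,\partial_{r_j} h.\]

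Next, I substitute the specific form $h(r,\theta,z) = \varepsilon + g(z)\prod_j f_j(r_j)$, for which $\partial_{r_j}h = g(z) f_j'(r_j)\prod_{k\neq j} f_k(r_k)$. Plugging this in gives
\[\dot z \;=\; -\varepsilon \;-\; g(z)\prod_{j} f_j(r_j) \;+\; \tfrac{1}{2}\sum_{j=1}^n r_j\, g(z)\, f_j'(r_j)\prod_{k\neq j} f_k(r_k).\]
The hypotheses $g\geq 0$, $f_k\geq 0$, and $f_j$ (non-strictly) decreasing give $r_j f_j'(r_j) \leq 0$, so both the second summand and each term of the third are $\leq 0$. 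Hence $\dot z \leq -\varepsilon < 0$ pointwise on $\R^{2n+1}$, which is the required strict monotonicity of the $z$-coordinate along trajectories of $X_h$. Note that although the polar expression for $c_j$ is singular at $r_j = 0$, the combination $r_j\partial_{r_j}h$ appearing in $a$ vanishes smoothly there, so the inequality persists on the whole of $\R^{2n+1}$.

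The only step requiring care is the derivation of the formula for $a$ in polar coordinates, since conventions for $\alpha(X_h)$ and $\iota_{X_h}d\alpha$ differ between references and the sign in $a$ must match the one adopted elsewhere in the paper in order for the final inequality to point in the direction claimed. Once the formula for $a$ is in place, the conclusion is a one-line consequence of the sign hypotheses, with no further dynamical input needed.
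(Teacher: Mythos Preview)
Your proof is correct and follows essentially the same approach as the paper: compute $dz(Y^h)$ explicitly in polar coordinates and bound it above by $-\varepsilon$ using the sign hypotheses on $g$, $f_j$, and $f_j'$. Your displayed formula for $\dot z$ is in fact slightly more accurate than the paper's (which appears to contain a typo, writing $-1$ where $-\prod_j f_j(r_j)$ should appear), though this does not affect the validity of either argument since the relevant term is nonnegative in both versions.
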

\begin{proof}
Let $Y^h$ be the vector field of the contact Hamiltonian $h$, i.e. the vector field that satisfies $\alpha(Y^h)=- h$ and $d\alpha(Y^h, \cdot)= dh - dh(\partial_z)\cdot \alpha$. Then,
\[ dz(Y^h)= -\varepsilon + g(z)\cdot \left( -\prod_{k=1}^n f_k(r_k) +\frac{1}{2}\cdot \sum_{j=1}^n \left( r_j\cdot f'_j(r_j)\cdot \prod_{k\not=j} f_k(r_k) \right) \right). \]
In particular, $dz(Y^h(p))\leqslant -\varepsilon$ for all $p\in\R^{2n+1}$. Let $\gamma:I\to \R^{2n+1}$ be a trajectory of the contact Hamiltonian $h$. Then,
\[\frac{d}{dt}\left(z(\gamma(t)) \right)= dz(Y^h(\gamma(t)))\leqslant -\varepsilon.\]
Consequently, the function $t\mapsto z(\gamma(t))$ is strictly decreasing.
\end{proof}

\begin{proof}[Proof of Theorem~\ref{thm:sshdarboux}]
By assumptions, there exists a Darboux chart $\psi:O\to \R^{2n+1}$, $O\subset \partial W$, such that $\psi(P)= P(a_1, \ldots, a_n, b)$ for some $a_1,\ldots, a_n, b\in\R^+$. Since $P(a_1, \ldots, a_n, b)$ is compact and $\psi(O)$ open, there exist $b', a_1',\ldots, a_n'\in\R^+$ such that
\[P(a_1, \ldots, a_n, b)\subset \op{int} P(a_1', \ldots, a_n', b')\subset \psi(O).\]
In particular, $b<b'$. Denote $\varepsilon_1 := b'-b$

Let $h\in\mathcal{H}_{\op{int} P}(\partial W)$ be such that 
\begin{equation}\label{eq:productlike} h\circ \psi^{-1} (r, \theta, z) = g(z)\cdot \prod_{j=1}^n f_j(r_j)\end{equation}
for some smooth function $g:\R\to[0,+\infty)$ and some smooth decreasing functions $f_j:[0,+\infty)\to[0, +\infty)$, $j=1, \ldots, n$ such that $\op{supp} g \subset (0, b) $ and $\op{supp} f_j\subset (0, a_j)$. Let $\varepsilon_0\in\R^+$ be such that there are no closed Reeb orbits on $\partial W$ of period less than or equal to $\varepsilon_0$. 

Now, we show that the contact Hamiltonian $h+\varepsilon$ has no 1-periodic orbits if $0<\varepsilon<\min\{\varepsilon_0, \varepsilon_1\}$. This implies $\varepsilon\in \mathcal{O}(h)$ if $0<\varepsilon<\min\{\varepsilon_0, \varepsilon_1\}$. Let $\gamma:\R\to \partial W$ be a trajectory of the contact Hamiltonian $h+\varepsilon$. If $\gamma$ does not intersect $P$, then $\gamma$ is also a trajectory of the reparametrized Reeb flow $t\mapsto \varphi_{-\varepsilon\cdot t}$. Since $\varepsilon<\varepsilon_0$, this implies that $\gamma$ is not 1-periodic. Assume, now, that $\gamma$ does intersect $P$. If $\gamma$ is entirely contained in $O$, then Lemma~\ref{lem:bump} implies that $\gamma$ is not 1-periodic. If $\gamma$ is not entirely contained in $O$, then (by Lemma~\ref{lem:bump}) $\gamma$ intersects $\psi^{-1}\left( \R^{2n}\times[b, b'] \right)$. On $\psi^{-1}\left( \R^{2n}\times[b, b'] \right)$, the contact Hamiltonian $h+\varepsilon$ is equal to $\varepsilon$ and $\gamma(t)$ is equal to $\psi^{-1}(x,y, z-\varepsilon t)$ for some $(x,y,z)\in\R^{2n+1}$. In particular, $\gamma$ ``spends'' at least $\frac{b'-b}{\varepsilon}$ time passing through $\psi^{-1}\left( \R^{2n}\times[b, b'] \right)$. Since 
\[\frac{b'-b}{\varepsilon}> \frac{b'-b}{\varepsilon_1}=1,\]
$\gamma$ cannot be 1-periodic.

The same argument shows that the contact Hamiltonian $h^s:= s\cdot h+ \varepsilon$ has no 1-periodic orbits for all $s\in[0,1]$. Additionally, $\partial_sh^s\geqslant 0$. Therefore, the continuation map
\[HF_\ast(\varepsilon)=HF_\ast(h^0)\to HF_\ast(h^1)= HF_\ast(h+\varepsilon)\]
is an isomorphism \cite[Theorem~1.3]{uljarevic2022hamiltonian}. Since for every $\tilde{h}\in\mathcal{H}_{\op{int} P}(\partial W)$ there exists $h\in \mathcal{H}_{\op{int} P}(\partial W)$ of the form \eqref{eq:productlike} such that $\tilde{h}\leqslant h$, the theorem follows.
\end{proof}

\section{Immaterial transverse circles and selective symplectic homology of their complements}\label{sec:immaterial}

This section provides non-trivial examples where the selective symplectic homology is ``large''.  We start by defining \emph{immaterial} subsets of contact manifolds.

\begin{defn}
A subset $A$ of a contact manifold $\Sigma$ is called \emph{immaterial} if there exists a contractible loop $\varphi_t:\Sigma\to \Sigma$ of contactomorphisms such that its contact Hamiltonian $h_t:\Sigma\to\R$ (with respect to some contact form on $\Sigma$) is positive on $A$, i.e. such that it satisfies
\[(\forall x\in A)(\forall t\in\R)\quad h_t(x)>0.\]
\end{defn}

If a compact subset $A$ of a contact manifold $\Sigma$ is immaterial, then there exists a contractible loop of contactomorphisms on $\Sigma$ whose contact Hamiltonian is arbitrarily large on $A$. In fact, this property of a compact subset $A$ is equivalent to $A$ being immaterial.

\begin{lem}
A compact subset $A$ of a contact manifold $\Sigma$ is immaterial if, and only if, for every $a\in\R^+$ there exists a contractible loop of contactomorphisms on $\Sigma$ such that its contact Hamiltonian $h_t:\Sigma\to \R$ satisfies
\[(\forall x\in A)(\forall t\in\R)\quad h_t(x)\geqslant a.\]
\end{lem}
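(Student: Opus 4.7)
The ``if'' direction is immediate from the definition: applying the hypothesis with any positive constant, say $a=1$, yields a contractible loop of contactomorphisms whose contact Hamiltonian is pointwise $\geqslant 1>0$ on $A$, which is exactly the condition for $A$ to be immaterial.

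For the ``only if'' direction, the plan is to take a loop witnessing immateriality and amplify it by iteration. Fix a contact form on $\Sigma$ and pick a contractible loop $\varphi_t:\Sigma\to\Sigma$, $t\in[0,1]$, with $\varphi_0=\varphi_1=\op{id}$, whose contact Hamiltonian $h_t:\Sigma\to\R$ satisfies $h_t(x)>0$ for every $(x,t)\in A\times\R$. Because $A$ is compact, $h$ is continuous, and $h_t$ is 1-periodic in $t$, the quantity
\[m:=\min_{(x,t)\in A\times[0,1]} h_t(x)\]
is a strictly positive real number.

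The key step is to replace $\varphi_t$ by a $k$-fold iterate. Choose a smooth, non-decreasing reparametrization $\sigma:[0,1]\to[0,k]$ with $\sigma(0)=0$, $\sigma(1)=k$, $\sigma^{(j)}(0)=\sigma^{(j)}(1)$ for all $j\geqslant 0$, and $\int_0^1\sigma'(t)\,dt=k$. Extend $\varphi_t$ to $t\in\R$ by 1-periodicity (using $\varphi_0=\varphi_1=\op{id}$) and set $\tilde\varphi_t:=\varphi_{\sigma(t)}$. This is a smooth loop of contactomorphisms, and since $\varphi_t$ is contractible, so is $\tilde\varphi_t$ (concatenating a null-homotopy with itself $k$ times gives a null-homotopy of $\tilde\varphi_t$). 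A direct computation shows that the contact Hamiltonian of $\tilde\varphi_t$ is $\tilde h_t(x)=\sigma'(t)\cdot h_{\sigma(t)}(x)$. On $A$ one therefore has $\tilde h_t(x)\geqslant \sigma'(t)\cdot m$.

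To finish, one must arrange that $\sigma'(t)\cdot m\geqslant a$ for all $t$, i.e.\ $\sigma'(t)\geqslant a/m$ uniformly. This forces $\int_0^1\sigma'=k\geqslant a/m$, so the only remaining task is to exhibit a smooth reparametrization with $\sigma'\geqslant a/m$ everywhere, $\sigma(0)=0$, $\sigma(k)=k$, and all derivatives matching at the endpoints for large enough integer $k$ (take $k:=\lceil a/m\rceil+1$, and build $\sigma$ by slightly perturbing the linear map $t\mapsto kt$ so that all its derivatives vanish in small half-neighbourhoods of $0$ and $1$ while $\sigma'$ stays above $a/m$). This is a routine smoothing construction. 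The main conceptual obstacle is verifying that $\tilde\varphi_t$ remains a \emph{contractible} loop after the reparametrization; this is immediate from the fact that null-homotopies concatenate and that reparametrization of a loop preserves its free homotopy class.
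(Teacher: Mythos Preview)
Your approach matches the paper's: iterate the witnessing loop $k$ times so that the contact Hamiltonian scales by $k$. The paper does this without any reparametrization fuss, simply setting $h^k_t(x):=k\cdot h_{kt}(x)$ and noting that this generates the $k$-fold concatenation of $\varphi$, which is contractible and satisfies $h^k_t\geqslant k\cdot m>a$ on $A$.

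There is one genuine slip in your execution. You ask for a smooth $\sigma$ with $\sigma'\geqslant a/m>0$ everywhere and simultaneously ``all its derivatives vanish in small half-neighbourhoods of $0$ and $1$''. These two requirements are incompatible: vanishing of all derivatives includes $\sigma'=0$ there. The fix is that no smoothing is needed at all. Since $\varphi$ is already a smooth $1$-periodic $\R$-family of contactomorphisms (the standing convention in the paper), the linear choice $\sigma(t)=kt$ yields a smooth $1$-periodic loop $t\mapsto\varphi_{kt}$ directly; the periodicity condition you actually need is $\sigma^{(j)}(0)=\sigma^{(j)}(1)$ for $j\geqslant 1$ (not $j\geqslant 0$, which would contradict $\sigma(0)=0$, $\sigma(1)=k$), and $\sigma(t)=kt$ satisfies it trivially. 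With this correction your proof coincides with the paper's.
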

\begin{proof}
Let $a\in\R^+$ be an arbitrarily large positive number and let $A$ be a compact immaterial subset of a contact manifold $\Sigma$. Then, there exists a contractible loop $\varphi:\Sigma\to\Sigma$ of contactomorphisms such that its contact Hamiltonian $h_t:\Sigma\to\R$ satisfies
\[(\forall x\in A)(\forall t\in\R)\quad h_t(x)>0.\]
Denote $m:= \min_{x\in A, t\in\R} h_t(x)>0$. Let $k\in\mathbb{N}$ be such that $k\cdot m> a$. Denote by $h^k_t:\Sigma\to\R$ the contact Hamiltonian defined by \mbox{$h^k_t(x):=k\cdot h_{kt}(x)$}. The contact Hamiltonian $h^k$ furnishes a loop of contactomorphisms that is obtained by concatenating $\varphi$ to itself $k$ times. In particular, $h^k$ generates a contractible loop of contactomorphisms. By construction
\[(\forall x\in A)(\forall t\in \R)\quad h^k_t(x)\geqslant k\cdot m>a.\]
This proves one direction of the lemma. The other direction is obvious.
\end{proof}

The next lemma implies that a singleton (i.e. a set consisting of a single point) is immaterial in every contact manifold of dimension greater than 3. By continuity, every point in a contact manifold of dimension greater than 3 has an immaterial neighbourhood.

\begin{lem}\label{lem:ptnegl}
Let $\Sigma$ be a contact manifold of dimension $2n+1 > 3$. Then, there exists a contractible loop $\varphi_t:\Sigma\to \Sigma$ of contactomorphisms such that its contact Hamiltonian is positive at some point (for all times $t$). 
\end{lem}
\begin{proof}
Let $\mathbb{S}^{2n+1}$ be the standard contact sphere seen as the unit sphere in $\mathbb{C}^{n+1}$ centered at the origin. The unitary matrices act on $\mathbb{S}^{2n+1}$ as contactomorphisms. Let $\psi_t:\mathbb{S}^{2n+1}\to \mathbb{S}^{2n+1}$ be the contact circle action given by
\[ \psi_t(z):= \left( z_1, \ldots, z_{n-1}, e^{2\pi i t} z_n, e^{-2\pi i t} z_{n+1}  \right). \]
The loop
\[t\mapsto \left[\begin{matrix} e^{2\pi i t} & 0\\ 0 & e^{-2\pi i t} \end{matrix}\right]\]
is contractible in the unitary group $U(2)$. Hence, there exists a smooth $s$-family $A^s$, $s\in[0,1]$,  of loops in $U(2)$ such that
\[A^1(t)=  \left[\begin{matrix} e^{2\pi i t} & 0\\ 0 & e^{-2\pi i t} \end{matrix}\right]\]
and such that $A^0(t)= \left[\begin{matrix}1&0\\ 0&1\end{matrix}\right]$ for all $t$. Denote $\psi^s_t(z):=\left[ \begin{matrix} \mathbb{1}_{n-1} & \\ & A^s(t) \end{matrix}\right] z$. For all $s\in[0,1]$, $\psi^s$ is a loop of contactomorphisms of $\mathbb{S}^{2n+1}$ and $\psi_t^0=\op{id}$, $\psi_t^1=\psi_t$. Therefore, $\psi_t$ is a contractible loop of contactomorphisms. Denote by $h^s_t:\mathbb{S}^{2n+1}\to \mathbb{R}$ the contact Hamiltonian of $\psi^s_t$ and $h:=h^1$. 
Explicitly, $h(z_1,\ldots, z_{n+1})= 2\pi\cdot \left(\abs{z_{n+1}}^2-\abs{z_n}^2\right)$. In particular, $h$ is po\-si\-tive at the point $(0,\ldots, 0,1)$. 
Denote $V(r):=\left\{ z\in\mathbb{S}^{2n+1}\:|\: \abs{z_1}> 1-r \right\}$ and let $\varepsilon\in (0,1)$. Let $\mu: \mathbb{S}^{2n+1}\to[0,1]$ be a smooth cut-off function such that $\mu(x)=0$ for $x$ in a neighbourhood of $p:=(1,0,\ldots, 0)$ and such that $\mu(x)=1$ for $x\in\mathbb{S}^{2n+1}\setminus V(\frac{\varepsilon}{2})$. Let $f_t^s(x):= \mu(x)\cdot h^s_t(x)$. By the construction of $\mu$ and since $V(r)$ is invariant under $\psi^s_t$ for all $r,s$, and $t$, the contactomorphism $\varphi_1^{f^s}$ is compactly supported in $V(\varepsilon)$ for all $s$. Let $g^s_t:\mathbb{S}^{2n+1}\to\R$, $s\in[0,1]$ be the contact Hamiltonian that generates $t\mapsto \varphi_1^{f^{t\cdot s}}$, i.e. $\varphi_t^{g^s}= \varphi_1^{f^{t\cdot s}}$. Denote $g:=g^1.$

The map $\varphi^{f^1}_t\circ(\varphi_t^g)^{-1}$ is a loop of contactomorphisms. Its contact Hamiltonian $e_t:\mathbb{S}^{2n+1}\to\R$ is equal to 0 in a neighbourhood of $p$ and coincides with $f^1$ in $\mathbb{S}^{2n+1}\setminus V(\varepsilon)$. Consequently (since $f^1$ and $h$ coincide in $\mathbb{S}^{2n+1}\setminus V(\varepsilon)$), the contact Hamiltonians $e$ and $h$ coincide in $\mathbb{S}^{2n+1}\setminus V(\varepsilon)$. This implies that $\varphi^{f^1}_t\circ(\varphi_t^g)^{-1}$ is a loop of contactomorphisms of $\mathbb{S}^{2n+1}$ that are compactly supported in the complement of a neighbourhood of $p$. Additionally, this implies that there exists $q\in\mathbb{S}^{2n+1}\setminus V(\varepsilon)$ such that $e_t(q)=h(q)>0$ for all $t$.
The loop $\varphi_t^e=\varphi^{f^1}_t\circ(\varphi_t^g)^{-1}$ is contractible via the homotopy $\left\{\varphi^{f^s}_t\circ(\varphi_t^{g^s})^{-1}\right\}_{s\in[0,1]}$ that is also compactly supported in the complement of a neighbourhood of $p$. Since $\mathbb{S}^{2n+1}\setminus \{p\}$ is contactomorphic to the standard $\R^{2n+1}$ and since every contact manifold has a contact Darboux chart around each of its points, the lemma follows. 
\end{proof}

The following theorem implies that the complement of an immaterial circle has infinite dimensional selective symplectic homology under some additional assumptions.
\begin{theorem}\label{thm:compnegl}
Let $W$ be a Liouville domain and let $\Gamma\subset \partial W$ be an immaterial embedded circle that is transverse to the contact distribution. Denote $\Omega:=\partial W\setminus \Gamma$. Then, the rank of the continuation map $SH_\ast^\Omega(W)\to SH_\ast(W)$ is equal to $\dim SH_\ast(W)$.
\end{theorem}
\begin{proof}
This proof assumes results of Section~\ref{sec:pathiso}.
For an admissible contact Hamiltonian $h_t:\partial W\to \R$, denote by $r(h)=r(W, h)$ the rank of the canonical map $HF_\ast(h)\to SH_\ast(W)$. It is enough to prove that for every admissible $\ell\in\R$ there exists $h\in\mathcal{H}_\Omega(\partial W)$ and $\varepsilon\in\mathcal{O}(h)$ such that $r(\ell)\leqslant r(h+\varepsilon)$. Denote by $\alpha$ the contact form on $\partial W$ (the restriction of the Liouville form). Without loss of generality (see Theorem~2.5.15 and Example~2.5.16 in \cite{geiges2008introduction}), we may assume that there exists an open neighbourhood $U\subset \partial W$ of $\Gamma$ and an embedding $\psi: U\to \mathbb{C}^n\times\mathbb{S}^1$ such that $\psi(\Gamma)= \{0\}\times\mathbb{S}^1$ and such that
\[\alpha=\psi^\ast\left( d\theta + \frac{i}{2}\sum_{j=1}^n (z_jd\overline{z}_j-\overline{z}_jdz_j)\right).\]
Here, $z=(z_1,\ldots, z_n)\in\mathbb{C}^n$ and $\theta\in\mathbb{S}^1$. Let $\ell\in\R$ be an arbitrary admissible (constant) slope. Since $\Gamma$ is immaterial, there exists a contractible loop of contactomorphisms $\varphi^f_t:\partial W\to\partial W$ (which we see as a 1-periodic $\R$-family of contactomorphisms) such that its contact Hamiltonian $f_t:\partial W\to\R$ satisfies $\min_{x\in\Gamma, t\in\R} f_t(x)\geqslant 2\ell$.
Denote $m:=\min_{x\in\partial W, t\in\R} f_t(x)$. Let $h\in\mathcal{H}_{\Omega}(\partial W)$ be a strict contact Hamiltonian (i.e. its flow preserves the contact form $\alpha$ ) such that $h(x)\geqslant \ell- m$ for $x$ in the set $ \left\{ x\in\partial W\:|\: \min_{t\in\R} f_t(x)\leqslant \ell \right\}.$ The contact Hamiltonian $h$ can be constructed as follows.
Since the function $x\mapsto\min_{t\in\R} f_t(x)$ is continuous, the set $S:=\{x\in\partial W\:|\: \min_{t\in\R} f_t(x)\leqslant\ell\}$ is closed. Therefore, there exists a ball $B(r)\subset \mathbb{C}^n$ centered at the origin with sufficiently small radius $r$ such that $\overline{B(r)}\times\mathbb{S}^1\subset \psi(\partial W \setminus S)$. Now, we choose $h$ to be equal to a constant greater than $\ell-m$ on $\partial W\setminus \psi^{-1}\left( \overline{B(r)}\times \mathbb{S}^1 \right)$ and such that $h\circ\psi^{-1}(z, \theta)= \overline{h}(z_1^2+\cdots+ z_n^2)$ for $\abs{z}<r$ and for some smooth function $\overline{h}: [0,+\infty)\to [0,+\infty)$ that is equal to 0 near 0. Generically, $h$ has no non-constant 1-periodic orbits.

Let $\varepsilon\in\R^+$ be a sufficiently small positive number such that $\varepsilon\in\mathcal{O}(h)$ and denote $h^\varepsilon:= h+\varepsilon.$ Let $g:=h^\varepsilon\# f$ be the contact Hamiltonian that generates the contact isotopy $\varphi_t^{h^\varepsilon}\circ\varphi_t^f$, i.e.
\[ g_t(x) := h^{\varepsilon}(x) + f_t\circ \left(\varphi^{h^\varepsilon}_t\right)^{-1}(x). \]
(In the last formula, we used that $h^\varepsilon$ is a strict contact Hamiltonian.) If $h^\varepsilon (x) < \ell-m$, then (since $h^\varepsilon$ is autonomous and strict) \mbox{$h^\varepsilon\circ\left(\varphi_t^{h^\varepsilon}\right)^{-1}(x)<\ell-m$} for all $t$. Consequently (by the choice of $h$), $\min_{s\in\R} f_s\circ \left( \varphi_t^{h^\varepsilon}\right)^{-1}(x)> \ell$. This implies $g_t(x)\geqslant \ell$ for all $x\in\partial W$ and $t\in\R$.

Since $\varphi^f$ is a contractible loop of contactomorphisms, there exists a smooth homotopy $\sigma$ from the constant loop $t\mapsto \op{id}$ to $\varphi^f$. By Section~\ref{sec:pathiso}, there exist isomorphisms $\mathcal{B}(\varphi^f,\sigma) : HF_\ast(h^\varepsilon)\to HF_\ast(h^\varepsilon\# f)$ and $\mathcal{B}(\varphi^f,\sigma) : SH_\ast(W)\to SH_\ast(W)$ such that the following diagram
\[\begin{tikzcd}
    SH_\ast(W) \arrow{r}{\mathcal{B}(\varphi^f,\sigma)}& SH_\ast(W)\\
    HF_\ast(h^\varepsilon) \arrow{u}\arrow{r}{\mathcal{B}(\varphi^f,\sigma)}& HF_\ast(h^\varepsilon\#f),\arrow{u}
\end{tikzcd}\]
whose vertical arrows represent the continuation maps, commutes. Consequently, $r(h^\varepsilon)=r(h^\varepsilon\# f)= r(g)$. Since $g\geqslant \ell$, we have $r(g)\geqslant r(\ell).$ This further implies $r(h+\varepsilon)= r(h^\varepsilon)\geqslant r(\ell)$ and the proof is finished.

\end{proof}

\section{Applications to contact non-squeezing}\label{sec:main}

We start with a proof of Theorem~\ref{thm:ranknonsqueezing} from the introduction.
\settheoremtag{\ref{thm:ranknonsqueezing}}
\begin{theorem}
Let $W$ be a Liouville domain and let $\Omega_a, \Omega_b\subset \partial W$ be open subsets. If the rank of the continuation map
$SH_\ast^{\Omega_b}(W)\to SH_\ast(W)$
is (strictly) greater than the rank of the continuation map
$SH_\ast^{\Omega_a}(W)\to SH_\ast(W),$
then $\Omega_b$ cannot be contactly squeezed into $\Omega_a$.
\end{theorem}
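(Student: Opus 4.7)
The plan is to argue by contradiction. Suppose $\Omega_b$ can be contactly squeezed into $\Omega_a$, i.e.\ there is a contact isotopy $\varphi_t:\partial W\to\partial W$ with $\varphi_0=\op{id}$ and $\overline{\varphi_1(\Omega_b)}\subset \Omega_a$. In particular, $\Omega_b\subset \varphi_1^{-1}(\Omega_a)$. Since $\varphi_1$ is contact isotopic to the identity, the discussion preceding Theorem~\ref{thm:conjVSsont} guarantees that $\varphi_1$ is the ideal restriction of some $\psi\in\op{Symp}^\ast(\hat{W},\hat{\lambda})$, so conjugation with $\psi$ supplies isomorphisms $\mathcal{C}(\psi):SH_\ast^\Omega(W)\to SH_\ast^{\varphi_1^{-1}(\Omega)}(W)$ for every open $\Omega\subset\partial W$.

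The key step is to feed the pair $\Omega_a\subset\partial W$ into Theorem~\ref{thm:conjVSsont} and use that $\varphi_1^{-1}(\partial W)=\partial W$, so that the bottom row of the resulting commutative diagram lives on $SH_\ast(W)$:
\[
\begin{tikzcd}
SH_\ast^{\Omega_a}(W) \arrow{r}{\mathcal{C}(\psi)}\arrow{d}{\Phi}& SH_\ast^{\varphi_1^{-1}(\Omega_a)}(W)\arrow{d}{\Phi}\\
SH_\ast(W) \arrow{r}{\mathcal{C}(\psi)}& SH_\ast(W).
\end{tikzcd}
\]
Both horizontal arrows are isomorphisms, and pre- or post-composition with an isomorphism preserves rank; hence
\[
\op{rank}\bigl(\Phi^{\partial W}_{\Omega_a}\bigr)=\op{rank}\bigl(\Phi^{\partial W}_{\varphi_1^{-1}(\Omega_a)}\bigr).
\]

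To close the argument, invoke the inclusion $\Omega_b\subset\varphi_1^{-1}(\Omega_a)$ together with the directed-system identity $\Phi^{c}_{b}\circ\Phi^{b}_{a}=\Phi^{c}_{a}$ announced in the introduction. This factors $\Phi^{\partial W}_{\Omega_b}$ through $SH_\ast^{\varphi_1^{-1}(\Omega_a)}(W)$, which forces
\[
\op{rank}\bigl(\Phi^{\partial W}_{\Omega_b}\bigr)\leq \op{rank}\bigl(\Phi^{\partial W}_{\varphi_1^{-1}(\Omega_a)}\bigr)=\op{rank}\bigl(\Phi^{\partial W}_{\Omega_a}\bigr),
\]
contradicting the hypothesis that the first rank is strictly greater than the last.

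I do not anticipate any genuine obstacle: the whole proof is diagram-chasing on top of Theorem~\ref{thm:conjVSsont}, the extendability of contactomorphisms isotopic to the identity to elements of $\op{Symp}^\ast(\hat{W},\hat{\lambda})$, and the functoriality of continuation maps. The only point that requires a moment of care is unpacking the squeezing hypothesis into the form actually used, namely $\Omega_b\subset\varphi_1^{-1}(\Omega_a)$, which is what allows inserting $SH_\ast^{\varphi_1^{-1}(\Omega_a)}(W)$ as an intermediate group between $SH_\ast^{\Omega_b}(W)$ and $SH_\ast(W)$.
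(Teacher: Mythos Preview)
Your proof is correct and follows essentially the same approach as the paper: argue by contradiction, use the conjugation isomorphism furnished by the contact isotopy together with Theorem~\ref{thm:conjVSsont} to equate ranks, and then use the factorization of continuation maps through an intermediate group to derive the rank inequality. The only cosmetic difference is that the paper applies the conjugation to $\Omega_b$ (obtaining $r(\varphi_1(\Omega_b))=r(\Omega_b)$ and then factoring through $SH_\ast^{\Omega_a}(W)$), whereas you apply it to $\Omega_a$ (obtaining $r(\varphi_1^{-1}(\Omega_a))=r(\Omega_a)$ and then factoring through $SH_\ast^{\varphi_1^{-1}(\Omega_a)}(W)$); these are dual versions of the same argument.
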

\begin{proof}
Denote by $r(\Omega)\in\mathbb{N}\cup\{0,\infty\}$ the rank of the continuation map
$SH_\ast^\Omega(W)\to SH_\ast(W).$
Assume the contrary, i.e. that there exist open subsets $\Omega_a, \Omega_b\subset\partial W$ with $r(\Omega_a)<r(\Omega_b)$ and a contact isotopy $\varphi_t:\partial W\to\partial W$, $t\in[0,1]$ such that $\varphi_0=\op{id}$ and such that $\varphi_1(\Omega_b)\subset\Omega_a$. By Section~\ref{sec:conjugationiso} (and Section~\ref{sec:conjugationisomorphisms}), $r(\varphi_1(\Omega_b))= r(\Omega_b)$. Since $\varphi_1(\Omega_b)\subset\Omega_a$, the continuation map
$SH_\ast^{\varphi_1(\Omega_b)}(W)\to SH_\ast(W)$
factors through the continuation map
$SH_\ast^{\Omega_a}(W)\to SH_\ast(W).$
Hence, 
$r(\Omega_b)= r(\varphi_1(\Omega_b))\leqslant r(\Omega_a).$
This contradicts the assumption $r(\Omega_a)<r(\Omega_b)$.
\end{proof}

\settheoremtag{\ref{thm:homologyspheres}}
\begin{theorem}
Let $n > 2$ be a natural number and let $W$ be a $2n$-dimensional Liouville domain such that $\dim SH_\ast(W)> \sum_{j=0}^{2n} \dim H_j(W;\mathbb{Z}_2)$ and such that $\partial W$ is a homotopy sphere. Then, there exist two embedded closed balls $B_1, B_2\subset \partial W$ of dimension $2n-1$ such that $B_1$ cannot be contactly squeezed into $B_2$.
\end{theorem}
\begin{proof}
Denote $\Sigma:=\partial W$, and denote by $r(\Omega)\in\mathbb{N}\cup\{0,\infty\}$ the rank of the continuation map
$SH_\ast^{\Omega}(W)\to SH_\ast(W)$
for an open subset $\Omega\subset\Sigma$.

\textbf{Step~1} (A subset with a small rank). Since $SH_\ast^\emptyset(W)$ is isomorphic to $H_\ast(W, \partial W; \mathbb{Z}_2)$, Theorem~\ref{thm:sshdarboux} implies that there exists a non-empty open subset $\Omega\subset \Sigma$ such that $r(\Omega)\leqslant \sum_{j=1}^{2n} H_j(W,\partial W; \mathbb{Z}_2)= \sum_{j=0}^{2n} H_j(W;\mathbb{Z}_2)$.

\textbf{Step~2} (A subset with a large rank). This step proves that for every $c\in \R$ with $c\leqslant \dim SH_\ast(W)$, there exists an open non-dense subset $U\subset\Sigma$ such that $r(U)\geqslant c$. Assume the contrary, i.e. that there exists $c\in\R$ such that $r(U)< c$ for every open non-dense subset $U\subset \Sigma$. By Lemma~\ref{lem:ptnegl}, there exists a sufficiently small contact Darboux chart on $\Sigma$ that is immaterial. Let $\Gamma$ be  an embedded circle in that chart that is transverse to the contact distribution. Then, $\Gamma$ is immaterial as well. Consequently (by Theorem~\ref{thm:compnegl}), the continuation map $\Phi: SH_\ast^{\Sigma\setminus\Gamma}(W)\to SH_\ast(W)$ has rank equal to $\dim SH_\ast(W)$, i.e. $r(\Sigma\setminus\Gamma)=\dim SH_\ast(W)$.  
Hence, there exist $a_1, \ldots, a_k\in SH_\ast^{\Sigma\setminus\Gamma}(W)$, with $k\geqslant c$, such that $\Phi(a_1), \ldots, \Phi(a_k)$ are linearly independent.
Let $U_1, U_2, \ldots $ be an increasing family of open non-dense subsets of $\Sigma$ such that $\bigcup_{j=1}^\infty U_j=\Sigma\setminus\Gamma$. Since, by Theorem~\ref{thm:limitsh}, continuation maps furnish an isomorphism  
\[\underset{j}{\lim_{\longrightarrow}} SH_\ast^{U_j}(W)\to SH_\ast^{\Sigma\setminus\Gamma}(W),\]
there exist $m\in\mathbb{N}$ and $b_1,\ldots, b_k \in SH^{U_m}_\ast(W)$ such that $b_1, \ldots, b_k$ are mapped to $a_1, \ldots, a_k$ via the continuation map $SH_\ast^{U_m}(W)\to SH_\ast^{\Sigma\setminus\Gamma}(W).$
The images of $b_1, \ldots, b_k$ under the continuation map
$SH_\ast^{U_m}(W)\to SH_\ast(W)$
are equal to $\Phi(a_1), \ldots, \Phi(a_k)$, and therefore, are linearly independent. Hence, $r(U_m)\geqslant k \geqslant c.$ This contradicts the assumption and finishes Step~2.

\textbf{Step~3} (The final details). This step finishes the proof. Let $\Omega, U\subset \Sigma$ be open non-empty subsets such that $U$ is non-dense, and such that $r(\Omega)<r(U)$. Step~1 and Step~2 prove the existence of such sets $\Omega$ and $U$. Let $a\in \Omega$ and $b\in \Sigma\setminus\{a\}\setminus\overline{U}$ be two points. Since $\Sigma$ is a homotopy sphere, there exists a Morse function $f:\Sigma\to \R$ that attains its minimum at $a$ and its maximum at $b$ and that has no other critical points. The existence of such a function $f$ is guaranteed by the results of Smale \cite{smale1956generalized,smale1962structure,smale1962structure5} and Cerf \cite{cerf1968diffeomorphismes} (see also \cite[Proposition~2.2]{saeki2006morse}). The Morse theory implies that
$\Sigma_t:= f^{-1} \big( (-\infty, t]\big)$
is the standard $(2n-1)$-dimensional closed ball smoothly embedded into $\Sigma$ for all $t\in (f(a), f(b))$ (see, for instance, \cite{banyaga2013lectures}). For $s\in (f(a), f(b))$ sufficiently close to $f(a)$, $\Sigma_s\subset \Omega$. Similarly, for $\ell\in (f(a), f(b))$ sufficiently close to $f(b)$, $\op{int} \Sigma_\ell\supset U$. Since $r(\Omega)< r(U)$, by Theorem~\ref{thm:ranknonsqueezing}, $U$ cannot be contactly squeezed into $\Omega$. Hence, $\Sigma_\ell$ cannot be contactly squeezed into $\Sigma_s$ and the proof is finished (one can take $B_2:=\Sigma_s$ and $B_1:=\Sigma_\ell$).
\end{proof}

Now, we prove the contact non-squeezing for the Ustilovsky spheres.
\settheoremtag{\ref{thm:Ustilovskyspheres}}
\begin{theorem}
Let $\Sigma$ be an Ustilovsky sphere. Then, there exist two embedded closed balls $B_1, B_2\subset \Sigma$ of dimension equal to $\dim \Sigma$ such that $B_1$ cannot be contactly squeezed into $B_2$. 
\end{theorem}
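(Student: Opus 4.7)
The plan is to reduce the statement to Theorem~\ref{thm:homologyspheres} by exhibiting a Liouville filling $W$ of the Ustilovsky sphere $\Sigma$ such that $SH_\ast(W)$ is infinite dimensional. First I would identify the natural filling: a slightly perturbed Milnor fiber
\[ W=W_{m,p}:=\left\{z\in\mathbb{C}^{2m+2}\:\big|\:z_0^p+z_1^2+\cdots+z_{2m+1}^2=\varepsilon,\:\abs{z}\leqslant R\right\} \]
for small $\varepsilon>0$ and suitable $R$, equipped with the restriction of the standard Liouville form on $\mathbb{C}^{2m+2}$. This is a Liouville domain whose boundary, after rescaling by the Liouville flow, is contactomorphic to $\Sigma$ with the contact form $\alpha_p$. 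The fact that $\Sigma$ is a homotopy sphere (in fact diffeomorphic to the standard sphere when $p\equiv\pm1\pmod{8}$) is Brieskorn's classical computation, so the hypothesis on $\partial W$ in Theorem~\ref{thm:homologyspheres} is satisfied.

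Second, I would invoke the computation that $SH_\ast(W)$ is infinite dimensional. The Reeb flow of $\alpha_p$ on $\Sigma$ is explicit and its periodic orbits have been analyzed by Ustilovsky: it has infinitely many simple Reeb orbits whose Conley-Zehnder indices tend to $+\infty$, and the index gaps between successive orbits grow. The resulting SFT/cylindrical contact homology computation, promoted to symplectic homology of the Liouville filling $W$ (as carried out e.g.\ by Kwon--van Koert and Uebele for Brieskorn-type fillings), shows that $SH_\ast(W;\mathbb{Z}_2)$ has infinite total rank. This is precisely the infinite-dimensionality hypothesis needed to apply Theorem~\ref{thm:homologyspheres}.

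With the filling and the infinite-dimensionality of its symplectic homology in hand, Theorem~\ref{thm:homologyspheres} applied to $W$ produces two embedded closed $(4m+1)$-dimensional balls $B_1,B_2\subset\partial W=\Sigma$ such that $B_1$ cannot be contactly squeezed into $B_2$. This immediately yields the conclusion of Theorem~\ref{thm:Ustilovskyspheres}.

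The main obstacle is establishing $\dim SH_\ast(W)=\infty$; this is not a routine symplectic homology computation, and I would rely on existing Brieskorn-type SFT/symplectic homology computations in the literature rather than redo the index-theoretic bookkeeping by hand. Everything else is either Brieskorn's differential-topological work on the links or a direct appeal to Theorem~\ref{thm:homologyspheres}.
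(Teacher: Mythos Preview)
Your proposal is correct and follows essentially the same approach as the paper: identify the Brieskorn--Milnor fiber as a Liouville filling of the Ustilovsky sphere, establish that its symplectic homology is infinite dimensional, and then invoke Theorem~\ref{thm:homologyspheres}. The only difference is that the paper does not defer the infinite-dimensionality of $SH_\ast(W)$ entirely to the literature but carries out the computation explicitly using the Kwon--van~Koert spectral sequence, checking by hand that certain classes on the $E^1$ page survive to $E^\infty$.
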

\begin{proof}
In  the view of Theorem~\ref{thm:homologyspheres}, it is enough to show that the symplectic homology of the Brieskorn variety
\[ W:=\left\{ z=(z_0,\ldots, z_{2m+1})\in\mathbb{C}^{2m+2}\:|\: z_0^p + z_1^2 +\cdots + z_{2m+1}^2=\varepsilon\:\&\: \abs{z}\leqslant1 \right\}\]
is infinite demensional. Here, $m,p\in\mathbb{N}$ are natural numbers with $p\equiv \pm 1\pmod{8}$ and $\varepsilon\in\R^+$ is sufficiently small. The Brieskorn variety $W$ is a Liouville domain whose boundary is contactomorphic to an Ustilovsky sphere, and every Ustilovsky sphere is contactomorphic to $\partial W$ for some choice of $m$ and $p$.

Let $\Sigma_k$ be the sequence of manifolds such that $\Sigma_k=\partial W$ if $p\mid k$ (we use this notation for ``$k$ is divisible by $p$'') and such that
\[\Sigma_k:=\left\{z=(z_1,\ldots, z_{2m+1})\in\mathbb{C}^{2m+1}\:|\: z_1^2+\cdots z_{2m+1}^2=0\:\&\: \abs{z}=1\right\}\]
if $p\nmid k$. Denote
\[s_k:= \frac{k}{p}\cdot \left( (4m - 2)\cdot p + 4 \right) - 2m\]
if $p\mid k$, and 
\[s_k:= (4m-2)\cdot k + 2\cdot \left\lfloor\frac{2k}{p}\right\rfloor -2m +2\]
otherwise. Theorem~B.11 from \cite{kwon2016brieskorn} implies that there exists a spectral sequence $E_{k,\ell}^r$ that converges to $SH_\ast(W)$ such that its first page is given by
\[ E^1_{k,\ell}=\left\{ \begin{matrix} H_{k+\ell-s_k}(\Sigma_k;\mathbb{Z}_2) &\text{if } k>0,\\ H_{\ell+ 2m+1}(W, \partial W; \mathbb{Z}_2) &\text{if } k=0,\\ 0 &\text{if } k<0. \end{matrix}\right. \]
The terms of $E^1$ can be explicitly computed. If $p\nmid k$, then $\Sigma_k$ is diffeomorphic to the unit cotangent bundle $S^\ast\mathbb{S}^{2m}$ of the sphere. Otherwise, $\Sigma_k$ is diffeomorphic to $\mathbb{S}^{4m+1}$, because $p\equiv \pm 1\pmod{8}$ \cite{brieskorn1966beispiele}. Therefore,
\[H_j(\Sigma_k; \mathbb{Z}_2)\cong \left\{ \begin{matrix}\mathbb{Z}_2 & \text{if } j\in\{0, 2m-1, 2m, 4m-1\},\\  0 & \text{otherwise}  \end{matrix}\right.\]
if $p\nmid k$, and 
\[H_j(\Sigma_k; \mathbb{Z}_2)\cong \left\{ \begin{matrix}\mathbb{Z}_2 & \text{if } j\in\{0,4m+1\}, \\ 0 & \text{otherwise}  \end{matrix}\right.\]
if $p\mid k$. The Brieskorn variety $W$ is homotopy equivalent to the bouquet of $p-1$ spheres of dimension $2m+1$ \cite[Theorem~6.5]{milnor2016singular}. Therefore,
\[ H_j(W, \partial W; \mathbb{Z}_2) \cong \left\{\begin{matrix} \mathbb{Z}_2 & \text{if } j=4m+2,\\ \mathbb{Z}_2^{p-1} & \text{if } j=2m+1,\\ 0 & \text{otherwise.}\end{matrix} \right. \]
Figure~\ref{fig:spectralsequence} on page~\pageref{fig:spectralsequence} shows the page $E^1_{k,\ell}$ for $p=7$ and $m=1.$ For $k\in\mathbb{N}\cup \{0\}$, denote by $Q(k)$ the unique number in $\mathbb{Z}$ such that $E^{1}_{k, Q(k)}\not=0$ and such that $E^1_{k,\ell}=0$ for all $\ell>Q(k)$. Similarly, for $k\in\mathbb{N}\cup \{0\}$, denote by $q(k)$ the unique number in $\mathbb{Z}$ such that $E^{1}_{k, q(k)}\not=0$ and such that $E^1_{k,\ell}=0$ for all $\ell<q(k)$. Explicitly, 
\begin{align*}
    & Q(k)=\left\{\begin{matrix} 2m+1 &\text{if } k=0,\\ 4m-1+s_k-k & \text{if } p\nmid k,\\ 4m+1+s_k-k & \text{if }p\mid k, \end{matrix}\right.\\
    & q(k)=\left\{\begin{matrix} 0 & \text{if } k=0,\\ s_k-k & \text{for } k\in\mathbb{N}.  \end{matrix}\right.
\end{align*}
We will show that the element in $E^1_{ap-1, Q(ap-1)}$ ``survives'' in $SH_\ast(W)$ for $a\in\mathbb{N}$. (In Figure~\ref{fig:spectralsequence}, the fields $(ap-1, Q(ap-1))$, $a=1,2$ are emphasized by thicker edges.) Both sequences $Q(k)$ and $q(k)$ are strictly increasing in $k$. Since $Q(k)$ is strictly increasing, the element in $E^1_{ap-1, Q(ap-1)}$ cannot be ``killed'' by an element from $E^1_{k,\ell}$ if $k\leqslant ap-1$. Since
\[q(ap+1)-Q(ap-1)= 4m-3\geqslant 1,\]
the element in $E^1_{ap-1, Q(ap-1)}$ cannot be ``killed'' by an element from $E^1_{k,\ell}$ if $k\geqslant ap+1$. Finally, since $Q(ap-1)-q(ap)=2$, the non-zero groups $E_{ap, \ell}^1$ are the ones for $\ell= Q(ap-1)-2$ and $\ell=Q(ap-1)+4m-1$. In particular, $E^1_{ap, Q(ap-1)}=0$. Therefore, $E^\infty_{ap-1, Q(ap-1)}\not=0$ for all $a\in\mathbb{N}$. This implies $\dim SH_\ast(W)=\infty$ and the proof is finished.
\end{proof}

\begin{figure}
    \centering
    \begin{tikzpicture}[scale=0.5]
        \foreach \i in {0,1,..., 16}{ 
            \draw[very thin, gray] (\i, 0)--(\i, 27);
            }
        
        \foreach \i in {0,...,27}{
            \draw[very thin, gray] (0,\i)--(16, \i);
            }
        
        \node[below] at (0.5, 0) {$0$};
        \node[below] at (7.5, 0) {$7$};
        \node[below] at (14.5, 0) {$14$};
        
        \node[left] at (0, 0.5) {$0$};
        \node[left] at (0, 10.5) {$10$};
        \node[left] at (0, 20.5) {$20$};
        
        \node at (0.5, 0.5) {6};
        \node at (0.5, 3.5) {1};
        
        \node at (1.5, 1.5) {1};
        \node at (1.5, 2.5) {1};
        \node at (1.5, 3.5) {1};
        \node at (1.5, 4.5) {1};
        
        \node at (2.5, 2.5) {1};
        \node at (2.5, 3.5) {1};
        \node at (2.5, 4.5) {1};
        \node at (2.5, 5.5) {1};
        
        \node at (3.5, 3.5) {1};
        \node at (3.5, 4.5) {1};
        \node at (3.5, 5.5) {1};
        \node at (3.5, 6.5) {1};
        
        \node at (4.5, 6.5) {1};
        \node at (4.5, 7.5) {1};
        \node at (4.5, 8.5) {1};
        \node at (4.5, 9.5) {1};
        
        \node at (5.5, 7.5) {1};
        \node at (5.5, 8.5) {1};
        \node at (5.5, 9.5) {1};
        \node at (5.5, 10.5) {1};
        
        \node at (6.5, 8.5) {1};
        \node at (6.5, 9.5) {1};
        \node at (6.5, 10.5) {1};
        \node at (6.5, 11.5) {1};
        
        \node at (7.5, 9.5) {1};
        \node at (7.5, 14.5) {1};
        
        \node at (8.5, 12.5) {1};
        \node at (8.5, 13.5) {1};
        \node at (8.5, 14.5) {1};
        \node at (8.5, 15.5) {1};
        
        \node at (9.5, 13.5) {1};
        \node at (9.5, 14.5) {1};
        \node at (9.5, 15.5) {1};
        \node at (9.5, 16.5) {1};
        
        \node at (10.5, 14.5) {1};
        \node at (10.5, 15.5) {1};
        \node at (10.5, 16.5) {1};
        \node at (10.5, 17.5) {1};
        
        \node at (11.5, 17.5) {1};
        \node at (11.5, 18.5) {1};
        \node at (11.5, 19.5) {1};
        \node at (11.5, 20.5) {1};
        
        \node at (12.5, 18.5) {1};
        \node at (12.5, 19.5) {1};
        \node at (12.5, 20.5) {1};
        \node at (12.5, 21.5) {1};
        
        \node at (13.5, 19.5) {1};
        \node at (13.5, 20.5) {1};
        \node at (13.5, 21.5) {1};
        \node at (13.5, 22.5) {1};
        
        \node at (14.5, 20.5) {1};
        \node at (14.5, 25.5) {1};
        
        \node at (15.5, 23.5) {1};
        \node at (15.5, 24.5) {1};
        \node at (15.5, 25.5) {1};
        \node at (15.5, 26.5) {1};
        
        \draw[very thick] (6,11)--(7,11)--(7,12)--(6,12)--(6,11);
        \draw[very thick] (13,22)--(14,22)--(14,23)--(13,23)--(13, 22);
    
    \end{tikzpicture}
    \caption{The first page of the spectral sequence from the proof of Theorem~\ref{thm:Ustilovskyspheres} for $p=7$ and $m=1$. The number in the field $(k,\ell)$ represents $\dim E^1_{k,\ell}$. Empty fields are assumed to contain zeros. }
    \label{fig:spectralsequence}
\end{figure}

Finally, we prove Corollary~\ref{cor:nonsqR}.
\settheoremtag{\ref{cor:nonsqR}}
\begin{cor}
Let $m\in\mathbb{N}$. Then, there exist a contact structure $\xi$ on $\R^{4m+1}$ and an embedded closed ball $B\subset \R^{4m+1}$ of dimension $4m+1$ such that $B$ cannot be contactly squeezed into an arbitrary non-empty open subset by a compactly supported contact isotopy of $\left(\R^{4m+1}, \xi\right)$. 
\end{cor}
\begin{proof}
Let $S$ be a $(4m+1)$-dimensional Ustilovsky sphere and let $B_1, B_2\subset S$ be two embedded closed balls such that $B_1\cup B_2\not= S$ and such that $B_2$ cannot be contactly squeezed into $B_1.$ Let $p\in S\setminus (B_1\cup B_2)$. The Ustilovsky sphere $S$ is diffeomorphic to the standard sphere. Hence, there exists a diffeomorphism  $\psi: \R^{4m+1}\to S\setminus\{p\}.$ Let $\xi$ be the pull back of the contact structure on $S$ via $\psi$. Now, we prove that $\xi$ and $B:=\psi^{-1}(B_2)$ satisfy the conditions of the corollary. Assume the contrary. Then, there exists a compactly supported contact isotopy $\varphi_t: \R^{4m+1}\to\R^{4m+1}$ such that $\varphi_0=\op{id}$ and such that $\varphi_1(B)\subset\op{int} \left(\psi^{-1}(B_1)\right)$. Since $\varphi$ is a compactly supported isotopy, $\psi\circ\varphi_t\circ\psi^{-1}$ extends to a contact isotopy on $S$. This contact isotopy squeezes $B_2$ into $B_1$. This is a contradiction that finishes the proof.
\end{proof}


\section{Isomorphisms furnished by paths of admissible contact Hamiltonians}\label{sec:pathiso}

In this section, we construct an isomorphism
\[\mathcal{B}(\{h^a\}): HF_\ast(h^0)\to HF_\ast(h^1)  \]
associated with a smooth family $h^a_t:\partial W\to \R, a\in[0,1]$ of admissible contact Hamiltonians on the boundary of a Liouville domain $W$. As a particular instance of this construction, we associate an isomorphism
\[\mathcal{B}(\varphi^f, \sigma) : HF_\ast(h)\to HF_\ast(h\#f)\]
with a contractible loop $\varphi^f_t:\partial W\to\partial W$ of contactomorphisms and a homotopy $\sigma$ from the constant loop $t\mapsto\op{id}$ to $\varphi^f.$

Denote by $\mathfrak{S}$ the set of admissible contact Hamiltonians $h_t:\partial W\to \R$ on the boundary $\partial W$ of a Liouville domain $W$. The set $\mathfrak{S}$ is open in the space of the smooth functions $\partial W\times\mathbb{S}^1\to\R$ with respect to the $C^2$ topology. Let $\norm{\cdot}_{C^2}$ be a norm inducing the $C^2$-topology. Denote by $\mathcal{F}$ the family of open balls $B_R(\eta)$ with respect to $\norm{\cdot}_{C^2}$that satisfy $B_{9R}(\eta)\subset \mathfrak{S}$. For $\mathcal{O}=B_R(\eta)\in\mathcal{F}$, denote $\tilde{\mathcal{O}}:=B_{3R}(\eta)$.

\begin{defn}
    Let $h^a, a\in[0,1]$ be a smooth family of admissible contact Hamiltonians on the boundary of a Liouville domain. The isomorphism
    \[ \mathcal{B}(\{h^a\}) : HF_\ast(h^0)\to HF_\ast(h^1)\]
    is defined in the following way:
    \begin{enumerate}
    \item Choose finitely many sets $\mathcal{O}_1, \ldots, \mathcal{O}_m\in \mathcal{F}$ such that $h^a\in \bigcup_{j=1}^m \mathcal{O}_j$ for all $a\in[0,1]$.
    \item Choose $0=a_0< a_1<\cdots< a_k=1$ such that for each $j=0,\ldots, k-1$ there exists $\ell_j\in\{1,\ldots m\}$ such that $h^a\in \mathcal{O}_{\ell_j}$ for all $a\in[a_j, a_{j+1}]$.
    \item Choose $g^0,\ldots g^{k-1}\in\mathfrak{S}$ such that $g_j\in \tilde{\mathcal{O}}_{\ell_j}$ and $g_j\leqslant h^{a_j}, h^{a_{j+1}}$ for $j=0,\ldots, k-1$.
    \item Denote by 
    \begin{align*}
        & \Phi^j : HF_\ast(g^j)\to HF_\ast(h^{a_j}) \\
        & \Psi^j : HF_\ast(g^j)\to HF_\ast(h^{a_{j+1}})
    \end{align*}
    the continuation maps (they are isomorphisms because the contact Hamiltonians $(1-s)\cdot g^j + s\cdot h^{a_j} $ and $(1-s)\cdot g^j + s\cdot h^{a_{j+1}} $ are admissible and increasing with respect to the $s$-variable, see \cite[Theorem~1.3]{uljarevic2022hamiltonian} ), $j=0,\ldots, k-1$.
    \item Define
    \[\mathcal{B}(h):= \Psi^{k-1}\circ \left( \Phi^{k-1} \right)^{-1}\circ\cdots \circ \Psi^1\circ \left(\Phi^1\right)^{-1}\circ \Psi^0\circ \left(\Phi^0\right)^{-1}.\]
\end{enumerate}
\end{defn}
The isomorphism $\mathcal{B}(\{h^a\})$ does not depend on the additional choices. Moreover, if $\{h^a\}$ and $\{f^a\}$, $a\in[0,1]$ are two smooth families of admissible contact Hamiltonians such that $h^a\leqslant f^a$ for all $a\in[0,1]$, then the following diagram commutes
 \[\begin{tikzcd}
    HF_\ast(h^0) \arrow{r}{\mathcal{B}(\{h^a\})}\arrow{d}& HF_\ast(h^1)\arrow{d}\\
    HF_\ast(f^0) \arrow{r}{\mathcal{B}(\{f^a\})}& HF_\ast(f^1).
    \end{tikzcd}\]
In the diagram, the vertical arrows represent the continuation maps. Now, we associate an isomorphism $ \mathcal{B}(\varphi^f,\sigma): HF_\ast(h)\to HF_\ast(h\#f)$ with a contractible (smooth) loop $\varphi^f_t:\partial W\to\partial W$ of contactomorphisms and a smooth homotopy $\sigma^a_t:\partial W\to\partial W$ from the constant loop based at the identity to $\varphi^f$. We see the homotopy $\sigma^a_t$ as a smooth $\R\times[0,1]$-family of contactomorphisms that is 1-periodic in the $t\in\R$ variable and such that the following holds
\begin{enumerate}
    \item $ \sigma_t^0=\op{id} $ for all $t\in\R$,
    \item $\sigma_0^a=\sigma_1^a=\op{id}$ for all $a\in[0,1]$,
    \item $ \sigma_t^1=\varphi_t^f$  for all $ t\in\R$.
\end{enumerate}
For every admissible contact Hamiltonian $h\in\mathfrak{S}$, the homotopy $\sigma$ furnishes a smooth family $\eta^a:=h\#f^a$ of admissible contact Hamiltonians. Here, $f^a$ denotes the contact Hamiltonian of the contact isotopy $t\mapsto \sigma^a_t$. Define $ \mathcal{B}(\varphi^f,\sigma):= \mathcal{B}(\{\eta^a\})$. Now we show that $\sigma$ also induces an isomorphism $SH_\ast(W)\to SH_\ast(W)$ that behaves well with respect to the canonical maps $HF_\ast(h)\to SH_\ast(W)$. For a contact Hamiltonian $h$, denote $\op{osc}(h):=\max_{x,t} h_t(x)- \min_{x,t} h_t(x) $ and denote by $\kappa_t^h$ the smooth function determined by $(\varphi_t^h)^\ast\alpha=\kappa_t^h\cdot \alpha$, where $\alpha$ is the contact form. If $h,g$ are admissible contact Hamiltonians such that 
\[g-h\geqslant \max_{a\in[0,1]} \op{osc}(f^a\cdot \kappa^h),\]
then $g\# f^a\geqslant h\#f^a$ for all $a\in[0,1]$. Consequently, the following diagram commutes
\[\begin{tikzcd}
    HF_\ast(h) \arrow{r}{\mathcal{B}(\varphi^f,\sigma)}\arrow{d}& HF_\ast(h\#f)\arrow{d}\\
    HF_\ast(g) \arrow{r}{\mathcal{B}(\varphi^f,\sigma)}& HF_\ast(g\#f).
\end{tikzcd}\]
In the diagram, the vertical arrows represent the continuation maps. Hence, there exists an isomorphism $\mathcal{B}(\varphi^f,\sigma): SH_\ast(W)\to SH_\ast(W)$ such that the diagram
\[\begin{tikzcd}
    SH_\ast(W) \arrow{r}{\mathcal{B}(\varphi^f,\sigma)}& SH_\ast(W)\\
    HF_\ast(h) \arrow{u}\arrow{r}{\mathcal{B}(\varphi^f,\sigma)}& HF_\ast(h\#f),\arrow{u}
\end{tikzcd}\]
whose vertical arrows are canonical morphisms, commutes for every admissible contact Hamiltonian $h$.

\printbibliography
\end{document}